\documentclass{amsart}
\usepackage{amsmath}
  \usepackage{paralist}
  \usepackage{graphics} 
  \usepackage{epsfig} 
\usepackage{graphicx}  \usepackage{epstopdf}
 \usepackage[colorlinks=true]{hyperref}
\hypersetup{urlcolor=blue, citecolor=red}

  \textheight=8.2 true in
   \textwidth=5.0 true in
    \topmargin 30pt
     \setcounter{page}{1}



\newtheorem{thm}{Theorem}[section]
\newtheorem{prop}[thm]{Proposition}
\newtheorem{cor}[thm]{Corollary}
\newtheorem{lem}[thm]{Lemma}

\newtheorem{remark}[thm]{Remark}
\theoremstyle{defi}

\title[Maximal estimates for the Kramers-Fokker-Planck-Magnetic operator ] 
      {Maximal estimates for the Kramers-Fokker-Planck operator with electromagnetic field}

\author[Bernard Helffer $\&$ Zeinab Karaki]{Bernard Helffer$^{(1)}$ $\&$ Zeinab Karaki$^{(2)}$}

\subjclass{Primary: 35H10, 47A67; Secondary: 82C40, 82D40.}
 \keywords{Fokker-Planck equation; magnetic field; electrique potentiel; Lie algebra; irreductible representation; maximal estimate\,.}

 \email{bernard.helffer@univ-nantes.fr; zeinab.karaki@outlook.com}



\begin{document}
\maketitle

{\footnotesize 
\centerline{ $(1)$  Universit\'e de Nantes}
   \centerline{Laboratoire de Math\'ematiques Jean Leray}
   \centerline{ 2, rue de la Houssini\`ere}
    \centerline{BP 92208 F-44322 Nantes Cedex 3, France}}
    {\footnotesize 
  \centerline{  $(2)$    Universit\'e de Perpignan  Via Domitia}
   \centerline{Laboratoire de Math\'ematiques et Physique}
   \centerline{ 52 AVE Paul Alduy}
    \centerline{66860 Perpignan Cedex 9, France}
} 

\medskip

\bigskip


 \begin{abstract} 
In continuation of a former work  by the first author with  F. Nier (2009) and of a more recent work by the second author on the torus (2019), we consider the Kramers-Fokker-Planck operator (KFP) with an external electromagnetic field on $\mathbb R^d$. We show a maximal type estimate on this operator using a nilpotent approach for vector field polynomial operators and induced representations of a nilpotent graded Lie algebra. This estimate leads to an  optimal characterization of the domain of the closure of the KFP operator and a criterion for the compactness of the resolvent.
\end{abstract}

\section{Introduction and main results}
\subsection{Introduction}
The Fokker-Planck equation was introduced by Fokker   and Planck at the beginning of the twentieth century, to describe the evolution of the density of particles under the Brownian motion. In recent years, global hypoelliptic estimates have led to new results
motivated by  applications to the kinetic theory of gases. In
this direction many authors have shown maximal estimates
to deduce the compactness of the resolvent of the Fokker-Planck operator and to have
resolvent estimates in order  to address the issue of return to
the equilibrium. F. H\'erau and F. Nier in \cite{herau2004isotropic} have highlighted the
links between the Fokker-Planck operator with a confining potential and
the associated  Witten Laplacian. Later this work has been extended  in the book of B. Helffer
and F. Nier \cite{HeNi}, and we refer more specifically to their Chapter 9 for a proof of
the maximal estimate.

In this article, we continue the study of the model case of the operator
of Fokker-Planck with an external magnetic field $B_e$, which was initiated in the case of the torus $\mathbb T^d$ ($d=2,3$) in 
\cite{ZK,zk2}, by considering $\mathbb R^d$  and reintroducing an electric potential as in \cite{HeNi}.  In this context  we establish a maximal-type estimate for this model, giving a
characterization of the domain of its closed extension and giving sufficient conditions for the compactness of the resolvent. 
\subsection{Statement of the result}
For $d=2$ or $3$, we consider, for a given  external electromagnetic field  $B_e$ defined on 
$\mathbb R^d$ with value in $\mathbb{R}^{d(d-1)/2}$ and a real valued  electric potentiel $V$ defined on
$\mathbb R^d$,  the associated Kramers-Fokker-Planck operator $K$ (in short KFP) defined by:
\begin{align}\label{def_K}
 K=v\cdot \nabla_x -\nabla_x V\cdot \nabla_v-(v\wedge B_e )\cdot \nabla_v -\Delta_v + v^2/4 -d/2, 
\end{align}
where $v\in\mathbb{R}^{d}$ represents the velocity, $x \in \mathbb{R}^d$ represents the space variable, and the notation $(v \wedge B_e ) \cdot \nabla_v$ means:
\begin{equation*}
(v\wedge B_e)\cdot \nabla_{v}=\begin{cases}
b(x)\,(v_1 \partial_{v_{2}} -v_2 \partial_{v_{1}}) \quad \quad &\text{ if } d=2 \\~\\
b_1(x)(v_2 \partial_{v_{3}} -v_3 \partial_{v_{2}})+b_2(x) (v_3 \partial_{v_{1}} -v_1 \partial_{v_{3}})\\
+ b_3(x)(v_1 \partial_{v_{2}} -v_2 \partial_{v_{1}})  \quad \qquad
&\text{ if } d=3.
\end{cases}
\end{equation*}
 The operator $K$ is initially considered as an unbounded operator on the Hilbert space $L^2 (\mathbb{R}^{d}\times \mathbb{R}^{d})$ whose domain is 
  $D(K)=C_0^\infty(\mathbb R^d\times \mathbb{R}^d)$.\\
   We then denote by:
\begin{enumerate}
\item[$\bullet$] $K_{\min}$ the minimal extension of $K$  where $D(K_{min})$ is the closure of $D(K)$ with respect to the graph norm;
 \item[$ \bullet $] $K_{\max}$  the maximal extension of $K$ where $D(K_{\max})$ is given by:
$$ D(K_{\max}) =\{u\in L^2 (\mathbb{R}^{d}\times \mathbb{R}^{d}) \,/\, Ku\in L^2 (\mathbb{T}^{d}\times \mathbb{R}^{d})\}.$$
\end{enumerate}
We will use the notation $\bf{K}$ for the operator $K_{min}$ or  ${\bf K}_{B_e,V}$ if we want to mention the reference to $B_e$ and $V$.

 The existence of a strongly
continuous semi-group associated to operator $\bf K$ is shown in  \cite{ZK} when the magnetic field is regular and $V=0$. We will  improve this result by considering a much lower regularity. In order to obtain the maximal accretivity, we are led to substitute
the hypoellipticity argument by a regularity argument for the operators with coefficients in $ L^\infty_{loc} $, which will be combined with more classical results of Rothschild-Stein in \cite{rothschild1979criterion} for H\"ormander operators of type 2 (see \cite{hormander1967hypoelliptic} for more details of this subject). Our first result is:
\begin{thm}\label{prop 2}
If $B_{e}\in L^{\infty}_{loc}(\mathbb{R}^{d},\mathbb{R}^{d(d-1)/2})$ and $ V\in W^{1,\infty}_{loc}( \mathbb  R^d )$, then ${\bf K}_{B_e,V}$  is maximally accretive.
\end{thm}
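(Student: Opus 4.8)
The plan is to establish maximal accretivity by the classical route: show that $\bf K = K_{\min}$ is accretive and that the range of $\bf K + \lambda$ is dense (equivalently, that $K_{\max}^* + \bar\lambda$ is injective) for some $\lambda > 0$, so that $\bf K + \lambda$ is surjective and $\bf K$ generates a contraction semigroup. Accretivity is the easy half: writing $K = X_0 - \Delta_v + v^2/4 - d/2$ where $X_0 = v\cdot\nabla_x - \nabla_x V\cdot\nabla_v - (v\wedge B_e)\cdot\nabla_v$ is a real vector field, one checks on $C_0^\infty$ that $X_0$ is formally skew-adjoint (its divergence vanishes: $v\cdot\nabla_x$ and the magnetic term $-(v\wedge B_e)\cdot\nabla_v$ are divergence-free in $(x,v)$, and $-\nabla_x V\cdot\nabla_v$ has no $v$-divergence), while $-\Delta_v + v^2/4 - d/2$ is the harmonic-oscillator part, which is nonnegative since its ground state energy is exactly $d/2$. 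Hence $\jvRe\langle Ku, u\rangle = \|\nabla_v u\|^2 + \tfrac14\|v u\|^2 - \tfrac d2\|u\|^2 \ge 0$ for $u \in C_0^\infty$, and this extends to $D(K_{\min})$ by closure.

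The substance is the surjectivity/range-density statement under the weak regularity hypotheses $B_e \in L^\infty_{loc}$, $V \in W^{1,\infty}_{loc}$. I would argue that $K_{\min} = K_{\max}$: given $u \in D(K_{\max})$ with $(K_{\max}^* + \lambda)u = 0$, one wants $u = 0$, and since $K_{\max}^*$ is the maximal extension of the formal adjoint $K^* = -X_0 - \Delta_v + v^2/4 - d/2$ (which has the same structure with $B_e \mapsto -B_e$), it suffices to prove the a priori elliptic-type regularity that any $L^2$ solution of $K^\sharp u \in L^2_{loc}$ (for $K^\sharp$ either $K$ or $K^*$) is automatically locally in the natural energy space with enough regularity to justify the integration by parts that gives $\jvRe\langle K^\sharp u, u\rangle \ge 0$. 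This is where the Rothschild–Stein machinery enters: locally in $x$, freeze the magnetic and potential coefficients; then $X_0, \partial_{v_1},\dots,\partial_{v_d}$ together with their first brackets (the brackets $[v\cdot\nabla_x, \partial_{v_j}] = -\partial_{x_j}$ recover the $x$-derivatives) span the tangent space, so $K$ is a sum-of-squares-plus-drift Hörmander operator of type 2, and the Rothschild–Stein subelliptic estimates provide local hypoelliptic gain; the point is to check these estimates survive when the coefficients are only $L^\infty_{loc}$ (resp. Lipschitz for $\nabla_x V$) by a freezing/commutator argument, absorbing the error terms into the main estimate on a small ball.

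Concretely the steps are: (i) prove accretivity on $C_0^\infty$ and pass to $K_{\min}$; (ii) introduce cutoffs $\chi(x)\psi(v)$ and establish a local a priori estimate of the form $\|\nabla_v(\chi\psi u)\|^2 + \|\langle D_x\rangle^{1/3}(\chi\psi u)\|^2 \lesssim \|K^\sharp u\|_{L^2}\|u\|_{L^2} + \|u\|_{L^2}^2$ for $u \in D(K_{\max}^\sharp)$, using Rothschild–Stein with frozen coefficients plus a perturbation argument for the $L^\infty_{loc}/W^{1,\infty}_{loc}$ terms; (iii) deduce that $D(K_{\max}) \subset$ (the form domain), whence the integration by parts is legitimate and $\jvRe\langle(K_{\max}+\lambda)u,u\rangle \ge \lambda\|u\|^2$, forcing $K_{\max}^* + \bar\lambda$ — equivalently the analogous operator for $K^*$ — to be injective; (iv) conclude $\mathrm{Ran}(K_{\min}+\lambda)$ is dense and closed, hence all of $L^2$, so $K_{\min}$ is maximally accretive. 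The main obstacle I anticipate is step (ii): making the Rothschild–Stein estimates, which are stated for smooth Hörmander systems, robust under merely bounded measurable magnetic coefficients and Lipschitz drift — the commutator $[K^\sharp,\chi\psi]$ and the coefficient-freezing errors must be controlled in $L^2$ by quantities that can be absorbed, and this requires care since $v\wedge B_e$ multiplies a first-order $v$-derivative and $B_e$ is not differentiable, so one cannot naively commute derivatives past it; the resolution is to keep the magnetic term on the ``good'' (skew-adjoint) side throughout and only use its $L^\infty_{loc}$ bound as a zeroth-order perturbation of size controlled on the support of the cutoff.
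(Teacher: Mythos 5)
Your skeleton coincides with the paper's: accretivity on $C_0^\infty(\mathbb R^d\times\mathbb R^d)$ (your harmonic-oscillator computation is correct), then density of $\mathrm{Ran}(K+\lambda_0)$ by showing that any $u\in L^2$ solving $(K^*+\lambda_0)u=0$ in $\mathcal D'$ vanishes, the crux being a local regularity statement for the formal adjoint strong enough to legitimize the cut-off and integration-by-parts argument of \cite[Proposition 5.5]{HeNi}. The genuine gap is in your step (ii), i.e.\ in how you propose to obtain that regularity. A freezing/absorption argument based on Rothschild--Stein cannot be repaired by shrinking the ball: the freezing error for the magnetic term is $\big(v\wedge(B_e(x)-B_e(x_j))\big)\cdot\nabla_v u$, and since $B_e$ is only $L^\infty_{loc}$ --- not continuous --- the oscillation $|B_e(x)-B_e(x_j)|$ remains of order $\|B_e\|_{L^\infty(\supp\chi)}$ on arbitrarily small balls, so this first-order error is never small relative to the subelliptic gain of the frozen model. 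Your fallback (``keep the magnetic term on the skew-adjoint side and use its $L^\infty_{loc}$ bound as a zeroth-order perturbation'') does not close the argument either: $(v\wedge B_e)\cdot\nabla_v$ is genuinely first order in $v$ (bounded, but not small, on the support of the cutoff), and its skew-symmetry can only be exploited after one is allowed to integrate by parts against $u$ itself, i.e.\ after one already knows $\nabla_v u\in L^2_{loc}$ for $u$ merely in the maximal domain --- which is exactly what step (ii) is supposed to produce. A mollification in $x$ meets the same circularity: the commutator of the mollifier with $B_e(x)(v_1\partial_{v_2}-v_2\partial_{v_1})$ or with $\nabla_xV\cdot\nabla_v$ is controlled by $\|\nabla_v u\|_{L^2_{loc}}$, not by $\|u\|_{L^2_{loc}}$, so it cannot be absorbed before the regularity is known.

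The paper avoids freezing altogether. Since $\partial_{v_j}\big((v\wedge B_e)_j\big)=0$ and $\partial_{v_j}\partial_{x_j}V=0$, the rough first-order terms are in divergence form with respect to $v$, so $K^*f=g$ is rewritten as $P_0f=\sum_j c_j\,\partial_{v_j}f+\tilde g$ with $P_0=-v\cdot\nabla_x-\Delta_v$ the constant-coefficient Kolmogorov operator and $c_j=-(v\wedge B_e)_j-\partial_{x_j}V$; the regularity result for $P_0$ with such divergence-free coefficients (Proposition \ref{lem1}, i.e.\ Proposition A.3 of \cite{zk2}) gives $\nabla_v f\in L^2_{loc}$ and then $f\in\mathcal H^2_{loc}$ (Theorem \ref{thm2.1}), after which the \cite{HeNi} argument applies verbatim. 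To salvage your plan you would have to replace the freezing/absorption step by an argument of this structural kind --- exploiting $c_j\partial_{v_j}=\partial_{v_j}(c_j\,\cdot\,)$ and the smooth model operator $P_0$ --- because continuity of $B_e$, which your absorption step implicitly requires, is simply not part of the hypotheses.
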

The theorem implies that the domain of the operator ${\bf K}=K_{min}$ has the following property:
\begin{align}
\label{eq:1} 
D({\bf K})= D(K_{\max})\,.
\end{align}
We are next  interested in specifying the domain of the operator $ \bf K
  $ introduced in \eqref{eq:1}. For this goal, we will establish  a  maximal estimate  for $ \bf K $, using techniques which were  developed initially for the study of hypoellipticity of invariant operators on nilpotent groups and the proof of the Rockland conjecture. Before we state our main result, we introduce the following functional spaces:
\begin{itemize}
\item $B^{2}(\mathbb R^d)$ (or $B^2_v$ to indicate the name of the variables) denotes the space:
 $$B^2(\mathbb R^d) :=\{u\in L^{2}(\mathbb{R}^{d})\,/\, \forall (\alpha, \beta )\in \mathbb{N}^{2d},\, \vert \alpha \vert +\vert \beta \vert \leq 2 \,,\, v^{\alpha}\,\partial^{\beta}_{v}\,u\in L^{2}(\mathbb{R}^{d}) \}, $$
 which is equipped with its natural Hilbertian norm.
\item $ \tilde{B}^2 (\mathbb{R}^d \times \mathbb{R}^d) $ is the space $ L_{x}^{2} \widehat \otimes B^{2}_{v} $ (in $ L^2 (\mathbb{R}^d \times \mathbb{R}^d) $ identified with $ L_{x}^{2} \widehat \otimes L^{2}_{v} $) with its natural Hilbert norm.
 \end{itemize}
We can now state the second  theorem of this article:
\begin{thm}\label{hypoelli0}
Let $ d=2 \text{ or } 3$. We assume that $ B_e \in C^1(\mathbb{R}^{d},\mathbb R^{d(d-1)/2}) \cap L^\infty$ and that there exist positive constants $C$, $\rho_0 >\frac 13$  and $\gamma_0<\frac{1}{3}$ such that 
 
 \begin{equation} 
 \vert \nabla_x B_e(x)\vert \leq C <\nabla V (x)>^{\gamma_0} \,,
 \end{equation} 
\begin{equation}\label{hyp2/3}
|D_x^\alpha V (x)| \leq C <\nabla V (x)>^{1-\rho_0}\,,\, \forall \alpha \mbox{ s.t. } |\alpha|=2\,,
\end{equation}
where $$ <\nabla V (x)>=\sqrt{|\nabla V (x)|^2 +1}\,.
$$

 Then  there exists $C_1>0$  such that, for all $u \in C_0^\infty (\mathbb{R}^{d}\times \mathbb{R}^{d})$, the operator $K$ satisfies the following maximal estimate:
\begin{align}
||\, |\nabla V (x)|^\frac 23 \, u \, ||+\Vert (v\cdot \nabla_{x} -\nabla_x V\cdot \nabla_v -(v\wedge B_e )\cdot \nabla_v )u\Vert+\Vert u \Vert_{\tilde{B}^{2}} \leq C_1 (\,\Vert K u\Vert +\Vert u\Vert\, ) .
\label{hypomax}
\end{align}
\end{thm}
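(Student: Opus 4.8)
The plan is to prove the maximal estimate \eqref{hypomax} by the nilpotent (Rockland-type) method: freeze the coefficients of the electromagnetic field at an arbitrary point, compare the frozen operator with a left-invariant operator on a graded nilpotent Lie group, establish the maximal estimate for that model operator by representation theory, and then patch the local estimates together with a partition of unity, controlling the commutator errors by means of the structural hypotheses on $\nabla_x B_e$ and $D_x^2 V$.

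**Step 1: Reduction to a frozen model.** First I would fix a point $x_0\in\mathbb R^d$ and set $\lambda_j = \partial_{x_j} V(x_0)$ and (for $d=3$) $\mu_j = (B_e)_j(x_0)$, $\mu_j=b(x_0)$ for $d=2$. Replacing $\nabla_x V(x)$ by the constant vector $\lambda=\nabla V(x_0)$ and $B_e(x)$ by the constant $\mu=B_e(x_0)$ in \eqref{def_K} gives a constant-coefficient-in-$x$ model operator $K_{x_0}$. The commutator of $K$ and $K_{x_0}$ involves $(\nabla_x V(x)-\lambda)\cdot\nabla_v$ and $((v\wedge B_e(x))-(v\wedge\mu))\cdot\nabla_v$; by the mean value theorem these are bounded, on a ball of radius $r$ around $x_0$, by $r\sup|D_x^2V|\,|\nabla_v|$ and $r\,|v|\sup|\nabla_x B_e|\,|\nabla_v|$. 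The point of hypotheses \eqref{hyp2/3} and the bound on $|\nabla_x B_e|$ is exactly that these errors, after being distributed by Young's inequality, are absorbable by the three terms on the left-hand side of \eqref{hypomax} (the $|\nabla V|^{2/3}u$ term controls the gain in $\langle\nabla V\rangle$, while $\|u\|_{\tilde B^2}$ controls $v^\alpha\partial_v^\beta u$ with $|\alpha|+|\beta|\le 2$), provided $r$ is chosen as a suitable negative power of $\langle\lambda\rangle$, say $r\sim\langle\lambda\rangle^{-1/3}$; this is where the thresholds $\rho_0>1/3$ and $\gamma_0<1/3$ enter.

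**Step 2: The model operator is Rockland on a graded nilpotent group.** Here the operator $K_{x_0}$, after rescaling $v$ and $x$, is a polynomial in the vector fields $X_j=\partial_{v_j}$, $Y_j=\partial_{x_j}$ and the multiplication operators $v_j$, together with the magnetic rotation term. These generate a nilpotent graded Lie algebra $\mathfrak g$ (the relevant dilations assign weight $1$ to $\partial_v$ and $v$, weight $2$ to $v\cdot\nabla_x$-type directions, weight $3$ to the $\nabla_x$-directions, matching the $\frac23$ and $2$ homogeneities in \eqref{hypomax}). Following Helffer–Nourrigat and Rothschild–Stein, I would show that for every nontrivial irreducible unitary representation $\pi$ of the corresponding group $G$, the operator $\pi(K_{x_0})$ is injective on the smooth vectors — this is the Rockland condition — which one verifies by noting that $\jvRe\,\pi(K_{x_0})$ is, modulo lower-order terms, a sum of squares $\sum \pi(X_j)^*\pi(X_j)$ plus a harmonic-oscillator part $\pi(v)^2/4$ coming from the KFP structure, hence $\ge 0$ with trivial kernel unless $\pi$ is trivial. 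The Rockland theorem (equivalently Helffer–Nourrigat's characterization of maximal hypoellipticity) then yields the a priori estimate $\|P u\|+\|u\|\gtrsim\sum_{[\![Q]\!]\le[\![K_{x_0}]\!]}\|Q u\|$ for all monomials $Q$ in the generators of weight at most that of $K_{x_0}$; in particular it controls $\||\nabla V(x_0)|^{2/3}u\|$, the transport term, and $\|u\|_{\tilde B^2}$ for functions supported near $x_0$.

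**Step 3: Localization and summation.** I would take a partition of unity $\sum_k \chi_k^2\equiv 1$ with $\chi_k$ supported in a ball $B(x_k,r_k)$, $r_k\sim\langle\nabla V(x_k)\rangle^{-1/3}$, with bounded overlap and with $|\nabla^j\chi_k|\lesssim r_k^{-j}$. Applying Step 2's estimate to $\chi_k u$ with the model centered at $x_k$, then using Step 1 to replace $K_{x_k}(\chi_k u)$ by $\chi_k Ku$ up to absorbable errors, and finally using $|\nabla_x\chi_k|\lesssim r_k^{-1}\lesssim\langle\nabla V(x_k)\rangle^{1/3}$ together with the elliptic-in-$v$ gain to handle the commutator $[\,v\cdot\nabla_x,\chi_k]=(v\cdot\nabla_x\chi_k)$, one sums over $k$ using almost-orthogonality to obtain \eqref{hypomax}. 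One must also check at this stage that $\langle\nabla V(x)\rangle$ is slowly varying on each ball $B(x_k,r_k)$, which follows from \eqref{hyp2/3}: $|\nabla(\langle\nabla V\rangle)|\le |D_x^2 V|\lesssim\langle\nabla V\rangle^{1-\rho_0}$ so $\langle\nabla V\rangle$ changes by a bounded factor over a distance $\langle\nabla V\rangle^{-1/3}\le\langle\nabla V\rangle^{\rho_0-1}\cdot\langle\nabla V\rangle^{\cdots}$, again using $\rho_0>1/3$.

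**Main obstacle.** The genuinely delicate point is Step 2 together with the bookkeeping of weights: one must identify the correct graded nilpotent group attached to the magnetic KFP operator (the magnetic term $(v\wedge B_e)\cdot\nabla_v$ modifies the Lie-algebra structure relative to the non-magnetic case of \cite{HeNi}), verify the Rockland condition for \emph{all} nontrivial irreducibles including the infinite-dimensional ones where $\pi(v)$ and $\pi(\partial_v)$ act as unbounded operators on a Fock-type space, and then transfer the resulting invariant estimate into an estimate with the sharp weight $|\nabla V(x)|^{2/3}$ — the exponent $2/3$ being dictated by the ratio (weight $2$ of $\Delta_v$) $\big/$ (weight $3$ of $\nabla_x$). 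Keeping the errors from Step 1 strictly below this threshold is precisely what forces the hypotheses $\rho_0>\tfrac13$, $\gamma_0<\tfrac13$, and getting the inequalities to close is the heart of the argument.
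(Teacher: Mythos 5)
Your overall strategy (freeze the coefficients, treat the frozen model by nilpotent/Rockland techniques, then patch with a partition of unity whose radius is a negative power of $\langle\nabla V\rangle$, using $\rho_0>\frac13$, $\gamma_0<\frac13$ to absorb errors) is indeed the paper's strategy, but your Step 2 leaves out the mechanism that makes the whole thing work, and this is a genuine gap rather than a detail. The frozen operator $K_{x_0}$ contains the drift $-\nabla V(x_0)\cdot\nabla_v$, whose coefficient $w=\nabla V(x_0)$ is an \emph{unbounded} parameter; it is a term of weight $1$ with a huge constant, so it is not part of a fixed homogeneous left-invariant operator, and the Rockland/Helffer--Nourrigat estimate for the $w$-independent part neither controls it uniformly in $w$ nor produces the weight $|\nabla V(x_0)|^{2/3}\|u\|$ that you need on the left of \eqref{hypomax}. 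You flag the ``transfer'' as delicate but give no argument. The paper resolves exactly this point: after a partial Fourier transform in $x$ and a metaplectic rotation mixing $w$ with the dual variable $\xi$, the frozen family becomes $\check K_{\rho,b'}$ with $\rho_k=\sqrt{w_k^2+\xi_k^2}$, and this is realized as $\pi_{\ell_\rho,\mathcal H}(\mathcal F_{b'})$ where the element $\mathcal F_{b'}$ of the enveloping algebra is \emph{independent of $\rho$} and $\rho$ enters only through the linear form defining the induced representation; the uniformity over all induced representations in Helffer--Nourrigat's theorem is what gives estimates uniform in $w$. Even then, the term $|w|^{4/3}\|u\|^2$ does not come from the maximal estimate alone but from a separate inequality for $-\Delta_v+iv\cdot\rho$ (from \cite{HeNi}), and the intermediate weights $|w|^{2/3}\|\nabla_v u\|^2$, $|w|^{2/3}\|vu\|^2$ are obtained by complex interpolation. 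None of these steps appear in your outline, and without them the frozen-model estimate you assert in Step 2 is unproved. Your verification of the Rockland condition is also too quick: positivity of the real part does not by itself give injectivity; one must argue (as in the paper, following \cite{zk2}) that $\Re\langle\pi(\mathcal F_{b'})u,u\rangle=0$ forces $\pi(Y)u=0$ for $Y\in\mathcal G_1$, hence for $Y_{1,2}$ by difference, hence for all of $\mathcal G$ since these generate, and only then $u=0$; the magnetic terms must be checked not to spoil this computation.

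There is a second, quantitative, gap in your Steps 1 and 3: the choice $r_k\sim\langle\nabla V(x_k)\rangle^{-1/3}$ is exactly borderline and the absorption does not close. The commutator $[v\cdot\nabla_x,\chi_k]=(v\cdot\nabla\chi_k)$ produces, after summation, an error of size $C_\delta\,\|\langle\nabla V\rangle^{1/3}\,v\,u\|^2$ whose constant is \emph{not} small (it grows like $\delta^{-2}$ if you shrink the balls by a factor $\delta$), while the gain from the model estimate is $\frac1C\|\,|\nabla V|^{1/3}\,v\,u\|^2$ with a fixed constant; the same-order error cannot be absorbed. This is precisely why the paper takes $r(x)=\langle\nabla V(x)\rangle^{-s}$ with $\max(\frac23-\rho_0,\gamma_0)<s<\frac13$ (strictly), proves a slow-variation lemma for this $r$, and then splits the error integrals over $\Omega(R)=\{|\nabla V|<R\}$ and its complement, choosing first $\delta$ small and then $R$ large so that the factors $R^{2s-\frac23}$ and $R^{\frac43-2s-2\rho_0}$ are small; the leftover terms $\|\nabla_vu\|^2+\|vu\|^2$ are finally controlled by $\Re\langle\check Ku,u\rangle$. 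With your literal choice $s=\frac13$ this absorption scheme is unavailable, so as written the summation step fails; the fix is available only because $\gamma_0<\frac13$ and $\rho_0>\frac13$ leave room for a strictly smaller $s$.
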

The proofs will combine the previous works of \cite{HeNi} (in the case $B_e=0$) and  \cite{zk2} (in the case $V=0$)  with in addition two differences: 
\begin{itemize}
\item $\mathbb T^d$ is replaced by $\mathbb R^d$\,.
\item The reference operator in the enveloping algebra of the nilpotent algebra is different.
\end{itemize}
Notice also  that, when $B_e=0$,  our assumptions are weaker than in the book \cite{HeNi} where the property that $|\nabla V (x)|$ tends to $+\infty$ as $|x| \rightarrow +\infty$ was used to construct the partition of unity.\\
Using the density of  $C_0^\infty(\mathbb{R}^d \times \mathbb{R}^d)$  in the domain of  $\bf K$, we obtain the following characterization of this domain:
\begin{cor}
\begin{equation}\label{cardom}
\begin{array}{ll}
 D({ \bf K} )=\{u\in \tilde B ^2(\mathbb{R}^{d}\times\mathbb{R}^{d}) \, /\,\\  \qquad\qquad     \left(v\cdot\nabla_{x} -\nabla V\cdot \nabla_v -(v\wedge B_e )\cdot \nabla_{v}\right) u \mbox{ and } 
|\nabla V (x)|^\frac 23 \, u  \in L^{2}(\mathbb{ R}^{d}\times\mathbb{R}^{d}) \}.
\end{array}
\end{equation}
\end{cor}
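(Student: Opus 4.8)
The plan is to deduce the characterization \eqref{cardom} from the maximal estimate \eqref{hypomax} of Theorem \ref{hypoelli0} by a density argument, using that $D(\mathbf K)=D(K_{\min})$ is, by definition, the closure of $C_0^\infty(\mathbb R^d\times\mathbb R^d)$ for the graph norm $u\mapsto \|u\|+\|Ku\|$. Denote by $E$ the right-hand set in \eqref{cardom}, equipped with the norm
\[
\vertiii{u}:=\|u\|_{\tilde B^2}+\big\|\,(v\cdot\nabla_x-\nabla_xV\cdot\nabla_v-(v\wedge B_e)\cdot\nabla_v)u\,\big\|+\big\|\,|\nabla V(x)|^{2/3}u\,\big\|.
\]
First I would check that $E$ is complete, i.e.\ a Banach (in fact Hilbert) space: all the operators appearing — the $v^\alpha\partial_v^\beta$ with $|\alpha|+|\beta|\le2$, the first-order transport-type operator $X_0:=v\cdot\nabla_x-\nabla_xV\cdot\nabla_v-(v\wedge B_e)\cdot\nabla_v$, and multiplication by $|\nabla V(x)|^{2/3}$ — are closable on $L^2$ (they have densely defined formal adjoints, using $V\in W^{1,\infty}_{loc}$ and $B_e\in C^1\cap L^\infty$), so a $\vertiii{\cdot}$-Cauchy sequence converges in $L^2$ to a limit lying in the intersection of the domains of the closures, hence in $E$.

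Next comes the heart of the argument: the two inclusions. For $\supseteq$, I would first show $C_0^\infty\subseteq D(\mathbf K)$ with, for $u\in C_0^\infty$, the bound $\vertiii{u}\le C_1(\|Ku\|+\|u\|)$ — this is precisely \eqref{hypomax} together with the trivial estimate $\|u\|_{\tilde B^2}\le\|u\|_{\tilde B^2}$ already contained in the left-hand side. Since $C_0^\infty$ is dense in $D(\mathbf K)$ for the graph norm, for any $u\in D(\mathbf K)$ pick $u_n\in C_0^\infty$ with $u_n\to u$ and $Ku_n\to \mathbf K u$ in $L^2$; then $(u_n)$ is $\vertiii{\cdot}$-Cauchy by \eqref{hypomax} applied to $u_n-u_m$, so by completeness of $E$ it converges in $E$, and the $L^2$-limit forces the $E$-limit to be $u$; hence $u\in E$, giving $D(\mathbf K)\subseteq E$ with $\vertiii{u}\le C_1(\|\mathbf Ku\|+\|u\|)$. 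Conversely, for $E\subseteq D(\mathbf K)$ I would argue that on $C_0^\infty$ the graph norm of $K$ is controlled by $\vertiii{\cdot}+\|\cdot\|$ (indeed $Ku=X_0u-\Delta_vu+(v^2/4-d/2)u$, and $\|-\Delta_vu+(v^2/4)u\|\lesssim\|u\|_{\tilde B^2}$), so any $u\in E$ — which by definition of $\tilde B^2$ and $X_0$, $|\nabla V|^{2/3}$ as closed operators is a limit in $\vertiii{\cdot}$-norm of $C_0^\infty$ functions, provided $C_0^\infty$ is $\vertiii{\cdot}$-dense in $E$ — is a graph-norm limit of $C_0^\infty$ functions, hence lies in $D(K_{\min})=D(\mathbf K)$.

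The main obstacle is the density of $C_0^\infty(\mathbb R^d\times\mathbb R^d)$ in $E$ for the norm $\vertiii{\cdot}$; without it the inclusion $E\subseteq D(\mathbf K)$ is not immediate. The natural route is a two-step truncation-and-regularization: given $u\in E$, multiply by cutoffs $\chi(x/R)\chi(v/R)$ and control the commutators $[X_0,\chi(\cdot/R)]$ and $[v^\alpha\partial_v^\beta,\chi(v/R)]$, which are $O(1/R)$ times lower-order terms already bounded by $\vertiii{u}$ — here one uses $B_e\in L^\infty$ and that $|\nabla V|^{2/3}\chi(x/R)$ introduces no new difficulty since $\nabla V\in L^\infty_{loc}$ on the support; then mollify in both variables, noting that mollification commutes with constant-coefficient pieces and produces only manageable commutator errors with the variable coefficients $\nabla_xV$ and $B_e$ on the (now compact) support, using $V\in W^{2,\infty}_{loc}$ and $B_e\in C^1$. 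An alternative, cleaner for exposition, is to invoke Theorem \ref{prop 2}: since $\mathbf K=K_{\min}=K_{\max}$, one has $D(\mathbf K)=\{u\in L^2: Ku\in L^2\}$, so it suffices to prove the purely algebraic equivalence, for $u\in L^2$, between $Ku\in L^2$ and $u\in E$ — the direction $u\in E\Rightarrow Ku\in L^2$ is the elementary estimate $Ku=X_0u+(-\Delta_v+v^2/4-d/2)u$ above, and $Ku\in L^2\Rightarrow u\in E$ follows from \eqref{hypomax} extended to $D(K_{\max})$ by the same density of $C_0^\infty$ in $D(K_{\max})$ for the graph norm (which is exactly \eqref{eq:1}). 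I would present this second route, as it isolates the one genuinely nontrivial input — the maximal estimate \eqref{hypomax} — and reduces everything else to bookkeeping.
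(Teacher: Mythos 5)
Your proposal is correct and, in the second route you ultimately choose to present, it is essentially the paper's own argument: the corollary is deduced from the density of $C_0^\infty(\mathbb{R}^{d}\times\mathbb{R}^{d})$ in $D(\mathbf K)$ for the graph norm together with the maximal estimate \eqref{hypomax} (giving $D(\mathbf K)$ contained in the right-hand set), while the reverse inclusion is the elementary observation that for $u$ in that set one has $Ku\in L^{2}$, hence $u\in D(K_{\max})=D(\mathbf K)$ by Theorem \ref{prop 2} and \eqref{eq:1}. Your first, discarded route through density of $C_0^\infty$ in the space defined by the right-hand side is an unnecessary detour, and you were right to set it aside in favour of the argument via $K_{\min}=K_{\max}$.
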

In particular this implies that  under the assumptions of Theorem \ref{hypoelli0} the operator ${ \bf K}_{B_e,V} $  has compact resolvent if and only if ${ \bf K}_{B_e=0,V} $ has the same property. This is in particular the case  (see \cite{HeNi}) when 
\begin{equation*}
|\nabla V (x) | \to + \infty \text{ when }  |x|\to +\infty\,,
\end{equation*}
as can also be seen directly from \eqref{cardom}. 
\begin{remark}For more results in the case without magnetic field we refer to \cite{HeNi} and  recent  results obtained  in 2018 by Wei-Xi Li \cite{WXL} and in 2019 by M. Ben Said \cite{BS} in connexion with a conjecture of Helffer-Nier relating the compact resolvent property for the (KFP)-operator with the same property for the Witten Laplacian on $(0)$-forms: $-\Delta_{x,v}+\frac 14  |\nabla \Phi|^2 - \frac 12 \Delta \Phi$, with $\Phi(x,v)= V(x) + \frac{v^2}{2}$. Its proof involves also nilpotent techniques. One can then naturally ask on results when $B_e(x)$ is unbounded. In particular, the existence of  (KFP)-magnetic bottles (i.e. the compact resolvent property for the (KFP)-operator) when $V=0$ is natural. Here we simply observe that Proposition~5.19 in \cite{HeNi} holds (with exactly the same proof) for ${\bf K}_{B_e,V}$ when $B_e$ and $V$ are $C^\infty$. Hence there are no (KFP)-magnetic bottles.
\end{remark}
\section{Maximal accretivity for the Kramers-Fokker-Planck operator with a weakly regular electromagnetic field}
 To prove Theorem \ref{prop 2}, we will show the Sobolev regularity associated to the following problem
$$
 K^*f=g \mbox{  with } f,g\in L^2_{loc}(\mathbb{R}^{2d})\,,$$ where $K^*$ is the formal adjoint of $ K $:
\begin{align}
 K^*=-v\cdot\nabla_x-\Delta_v+(v\wedge B_e+\nabla_x V)\cdot \nabla_v +v^2/4-d/2\,.\label{def_K*}
 \end{align}
The result of Sobolev regularity is the following:
\begin{thm}\label{thm2.1}
Let $d=2$ or $3$. We suppose that $B_e\in L^{\infty}_{loc} (\mathbb{R}^{d}, \mathbb{R}^{d(d-1)/2})$ and $V\in W^{1,\infty}_{loc}( \mathbb  R^d \times \mathbb  R^d)$. Then for all $f\in L^{2}_{loc} (\mathbb{R}^{2d})$, such that $ K^* f=g$ with $g\in L^{2}_{loc} (\mathbb{R}^{2d})$, then $ f\in \mathcal{H}^{2}_{loc}(\mathbb{R}^{2d})$\,.
\end{thm}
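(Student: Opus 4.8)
The plan is to establish the local $\mathcal{H}^2$-regularity of solutions of $K^* f = g$ by a two-step procedure: first reduce to the case of $L^\infty_{\mathrm{loc}}$ coefficients and show an a priori interior regularity estimate, and then bootstrap from $\mathcal{H}^1$-type control to full $\mathcal{H}^2$-control. The starting point is that $K^*$, written as in \eqref{def_K*}, is a sum of squares operator modulo lower order terms: setting $Y_j = \partial_{v_j}$ and $X_0 = -v\cdot\nabla_x + (v\wedge B_e + \nabla_x V)\cdot\nabla_v$, we have $K^* = -\sum_j Y_j^2 + X_0 + (v^2/4 - d/2)$, and the commutators $[Y_j, X_0] = -\partial_{x_j} + (\text{terms in }\partial_v)$ recover the missing $x$-derivatives, so H\"ormander's bracket condition of step $2$ holds wherever the coefficients are smooth. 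The difficulty is that $B_e$ is only $L^\infty_{\mathrm{loc}}$ and $\nabla V$ is only $L^\infty_{\mathrm{loc}}$, so the classical H\"ormander hypoellipticity theorem does not apply directly.

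First I would localize: fix a ball, multiply $f$ by a cutoff $\chi\in C_0^\infty$, and observe that $\chi f$ solves an equation of the same type with a right-hand side in $L^2$ (the commutator $[K^*,\chi]$ involves only $f$ and its first $v$-derivatives, which at this stage must be controlled by an independent argument — typically a Caccioppoli-type estimate testing the equation against $\chi^2 f$ to get $\nabla_v f \in L^2_{\mathrm{loc}}$, using the accretive structure of $-\sum Y_j^2 + v^2/4$). Then I would freeze or regularize the coefficients: replace $B_e$ and $\nabla_x V$ by mollifications $B_e^{(\eps)}$, $(\nabla_x V)^{(\eps)}$, apply the Rothschild–Stein subelliptic estimates from \cite{rothschild1979criterion} for the resulting smooth H\"ormander operator of type $2$ — which yield a gain of $1/k$ derivatives (here $k=2$, so a gain of one half-derivative in the full gradient, hence $\mathcal{H}^{2/2}$... actually an a priori estimate of the form $\|u\|_{\mathcal{H}^1_{\mathrm{loc}}} \lesssim \|K^{*(\eps)} u\| + \|u\|$ for the type-$2$ operator, and iterating to $\mathcal{H}^2$). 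The key is that the Rothschild–Stein estimates come with constants depending only on $L^\infty$ bounds of the coefficients and their derivatives up to a fixed order, \emph{but} one needs to check that the gain-of-regularity estimate itself survives with constants independent of $\eps$; this is precisely where the regularity argument for $L^\infty_{\mathrm{loc}}$ coefficients replaces hypoellipticity, as announced in the text before Theorem \ref{prop 2}.

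The main obstacle I anticipate is exactly this uniformity in $\eps$: the Rothschild–Stein machinery (lifting to a free nilpotent group, approximation by left-invariant operators) produces subelliptic estimates whose constants, if one is naive, depend on derivatives of the coefficients of all orders. To get around this one should not apply Rothschild–Stein to $K^{*(\eps)}$ as a whole, but rather isolate the \emph{principal part} $-\sum_j \partial_{v_j}^2 - v\cdot\nabla_x$ (which is the KFP operator with frozen/zero field and has $C^\infty$, in fact polynomial, coefficients, so its subelliptic estimate is fixed once and for all), and treat the magnetic and potential terms $(v\wedge B_e + \nabla_x V)\cdot\nabla_v$ as a perturbation that is \emph{bounded relative to} $\nabla_v$ — hence absorbable by the $\mathcal{H}^1_v$-part of the a priori estimate with a constant depending only on $\|B_e\|_{L^\infty}$ and $\|\nabla V\|_{L^\infty_{\mathrm{loc}}}$ on the ball. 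Passing to the limit $\eps\to 0$ then gives $f\in\mathcal{H}^2_{\mathrm{loc}}$ by weak compactness. I would expect the bootstrap to require two rounds (first $\mathcal{H}^1$, using the Caccioppoli step to seed it; then $\mathcal{H}^2$, differentiating the equation once in $v$ and commuting), and the verification that the perturbation terms stay in $L^2_{\mathrm{loc}}$ at each round is the routine-but-necessary bookkeeping.
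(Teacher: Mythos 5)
Your strategy has the same backbone as the paper's: view the Kolmogorov operator $P_0=-v\cdot\nabla_x-\Delta_v$ as the principal part, treat the magnetic and electric terms as a first-order perturbation with merely $L^\infty_{loc}$ coefficients, first gain $\nabla_v f\in L^2_{loc}$, and only then invoke the (fixed, polynomial-coefficient) type-2 subelliptic theory. The paper implements this by rewriting $K^*f=g$ as $P_0 f=\sum_j c_j(x,v)\,\partial_{v_j}f+\tilde g$ with $c_j=-(v\wedge B_e)_j-\partial_{x_j}V$ and $\tilde g=g-\frac{v^2}{4}f+\frac d2 f$, checking that $c_j\in L^{\infty,2}_{loc}$ and $\partial_{v_j}c_j=0$ (since $(v\wedge B_e)_j$ contains no $v_j$ and $\partial_{x_j}V$ is $v$-independent), and then quoting Proposition \ref{lem1} (Proposition A.3 of \cite{zk2}) to obtain $\nabla_v f\in L^2_{loc}$; the conclusion then follows the scheme of Theorem A.2 of \cite{zk2}, with the now-$L^2_{loc}$ perturbation moved to the right-hand side of the fixed operator $P_0$.

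Two cautions on your version. First, your Caccioppoli step is only the formal shadow of the quoted proposition: since $f$ is merely in $L^2_{loc}$, you cannot literally test $K^*f=g$ against $\chi^2 f$; one must mollify the solution and control the commutators of the mollifier with the rough first-order term, and it is precisely there that the structural facts $\partial_{v_j}c_j=0$ and $c_j\in L^{\infty,2}_{loc}$ (which you never isolate) are used. This is the real content of the first step, not routine bookkeeping, and the paper deliberately outsources it to \cite{zk2}. Second, mollifying the coefficients and worrying about $\eps$-uniform Rothschild--Stein constants is an unnecessary detour: once $\nabla_v f\in L^2_{loc}$, the terms $(v\wedge B_e+\nabla_x V)\cdot\nabla_v f$ and $\frac{v^2}{4}f$ are $L^2_{loc}$ sources, so the classical maximal hypoelliptic estimates for the single smooth operator $P_0$ apply directly, with no limit $\eps\to0$ to justify. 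Relatedly, your second bootstrap round (``differentiate once in $v$ and commute'') is delicate: $[\partial_{v_j},v\cdot\nabla_x]=\partial_{x_j}$ produces an $x$-derivative of $f$ you do not yet control, and the $x$-regularity available from the type-2 theory is only a fractional gain; the clean route is to apply the maximal regularity of $P_0$ once, which yields all the derivatives entering $\mathcal H^2$ (understood as the space adapted to the vector fields, as in \cite{zk2}, not the isotropic $H^2$) simultaneously.
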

Before proving Theorem \ref{thm2.1}, we recall the following result~:
\begin{prop}[Proposition A.3 in \cite{zk2}] \label{lem1}
Let $c_j\in L^{\infty,2}_{loc}(\mathbb{R}^{d}\times \mathbb{R}^d),\,\forall j=1,...,d$, where  $L^{\infty,2}_{loc}(\mathbb  R^d \times \mathbb  R^d)=\{u\in L^2_{loc}, \, \forall \varphi\in C^\infty_0 (\mathbb  R^d\times \mathbb  R^d)\text{ such that } \varphi u\in L^\infty_x(L^2_v)\}$, 
such that
\begin{align}\label{cond1}
\partial_{v_j}\,(c_j(x,v))=0\,\text{ in } \mathcal{D}'(\mathbb{R}^{2d})\,,\forall j=1,..,d\,.
\end{align}
Let $P_0$ be the Kolmogorov operator
\begin{align}\label{def:P0}
P_0:= -v\cdot \nabla_x -\Delta_v\,.
\end{align}
 If $h \in L^{2}_{loc}(\mathbb{R}^{2d})$ satisfies
\begin{equation}\label{prob1}
\begin{cases}
P_0h= \sum\limits_{j=1}^{d} c_{j}(x,v)\,\partial_{v_j}\,h_j+\tilde{g}\\
h_j, \tilde{g}\in L^{2}_{loc}(\mathbb{R}^{2d}),\,\forall j=1,...,d \,,
\end{cases}
\end{equation}
 then $ \nabla_v \, h \in L^{2}_{loc} (\mathbb{R}^{2d},\mathbb R^d)\,.$
\end{prop}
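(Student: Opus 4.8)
The plan is to reduce the problem to a known hypoelliptic/regularity estimate for the Kolmogorov operator $P_0 = -v\cdot\nabla_x - \Delta_v$ by treating the right-hand side $\sum_j c_j(x,v)\,\partial_{v_j} h_j + \tilde g$ as a perturbation, being careful that the coefficients $c_j$ are merely in $L^{\infty,2}_{loc}$ and satisfy the divergence-type constraint $\partial_{v_j}c_j = 0$ in $\mathcal D'$. First I would localize: fix $\varphi\in C_0^\infty(\mathbb R^{2d})$ and apply $P_0$ to $\varphi h$, producing $P_0(\varphi h) = \varphi P_0 h + [P_0,\varphi]h$, where the commutator $[P_0,\varphi] = -(v\cdot\nabla_x\varphi) - (\Delta_v\varphi) - 2\nabla_v\varphi\cdot\nabla_v$ involves at worst one $v$-derivative of $h$; by a standard bootstrap one may first obtain a weak a priori gain and then upgrade it, so without loss of generality we work with compactly supported $h$. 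The key structural point is the constraint \eqref{cond1}: because $\partial_{v_j}c_j = 0$, one can write $c_j(x,v)\,\partial_{v_j}h_j = \partial_{v_j}\big(c_j(x,v)\,h_j\big)$ in the sense of distributions, so that the problematic term is in \emph{divergence form} in the $v_j$ variable.

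The main step is then to invoke the smoothing properties of $P_0$. The operator $P_0$ is a H\"ormander operator of type 2 (the fields $\partial_{v_1},\dots,\partial_{v_d}$ together with the drift $v\cdot\nabla_x$ generate the full Lie algebra, since $[\partial_{v_j}, v\cdot\nabla_x] = \partial_{x_j}$), hence by Rothschild--Stein \cite{rothschild1979criterion} it admits a parametrix gaining two "anisotropic" derivatives: $\partial_{v_j}\partial_{v_k}$ and $\partial_{x_j}$ are controlled by $P_0$, and in particular $P_0^{-1}$ (microlocally, against compactly supported data) maps $L^2_{loc}$ into the anisotropic Sobolev space in which $\nabla_v$ gains one full derivative and $\partial_{v_j}\partial_{v_k}$ gains two. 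Applying this to the divergence-form right-hand side: the term $\partial_{v_j}(c_j h_j)$ lies in $H^{-1}_{v,loc}$ (one negative $v$-derivative of an $L^2_{loc}$ function — here one uses $c_j h_j \in L^2_{loc}$, which follows from $c_j\in L^{\infty,2}_{loc}$ meaning $\varphi c_j\in L^\infty_x(L^2_v)$ multiplied against $h_j\in L^2_{loc}\subset L^2_x(L^2_v)$, giving $\varphi c_j h_j\in L^1_x(L^1_v)$ locally — more carefully one pairs in a duality that respects the $x$-$v$ anisotropy), and the parametrix then returns $\nabla_v h\in L^2_{loc}$. The quantity $\tilde g\in L^2_{loc}$ contributes an even better (order $+2$ in $v$) term, which is harmlessly absorbed.

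The delicate point — and I expect this to be the main obstacle — is justifying the formal manipulations at the low regularity of the coefficients: the rewriting $c_j\partial_{v_j}h_j = \partial_{v_j}(c_j h_j)$ uses the Leibniz rule for distributional products when $c_j$ is only $L^{\infty,2}_{loc}$ and $h_j$ only $L^2_{loc}$, so one must argue via a mollification in $(x,v)$ (or only in $v$, exploiting $\partial_{v_j}c_j=0$), check that the regularized identities pass to the limit, and verify that the pairing $\langle \partial_{v_j}(c_j h_j), \text{test}\rangle = -\langle c_j h_j, \partial_{v_j}\text{test}\rangle$ makes sense against the parametrix output — which requires knowing a priori that test functions times the parametrix kernel have $L^\infty_x L^2_v$-type control, precisely the asymmetry built into the space $L^{\infty,2}_{loc}$. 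This is exactly the content that Proposition A.3 of \cite{zk2} was designed to handle, so I would follow that argument: mollify, apply the Rothschild--Stein subelliptic estimate to the mollified equation to get a uniform bound on $\nabla_v h^\eps$ in $L^2_{loc}$, and pass to the limit using weak compactness together with the commutator estimate $\|[\partial_{v_j},\chi_\eps]\cdot\|$ going to zero in the appropriate mixed norm.
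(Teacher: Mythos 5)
A preliminary remark: the paper itself does not prove this statement — it is recalled as Proposition A.3 of \cite{zk2} and used as a black box — so your proposal has to stand on its own; in particular your closing move, namely to ``follow the argument of Proposition A.3 of \cite{zk2}'', is circular, since that is exactly the statement you were asked to prove.

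The genuine gap is the product estimate on which your main step rests. You claim $c_jh_j\in L^2_{loc}$, so that $\partial_{v_j}(c_jh_j)$ is one negative $v$-derivative of an $L^2_{loc}$ function and a Rothschild--Stein type parametrix for $P_0$ returns $\nabla_v h\in L^2_{loc}$. Under the stated hypotheses this is false: locally $c_j$ is only in $L^\infty_x(L^2_v)$ and $h_j$ only in $L^2_x(L^2_v)$, so Cauchy--Schwarz in $v$ gives $c_jh_j\in L^2_x(L^1_v)$ locally (your own parenthesis even records only $L^1_{loc}$), not $L^2_{loc}$; for instance $c_j(x,v)\sim|v-v_0|^{-\alpha}$ with $0<\alpha<d/2$ is admissible in $L^{\infty,2}_{loc}$ while $c_jh_j\notin L^2_{loc}$ for general $h_j\in L^2_{loc}$. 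Hence the step ``the parametrix then returns $\nabla_v h\in L^2_{loc}$'' does not follow, and the very same missing bound is what would be needed to make your mollification scheme uniform in $\varepsilon$. The entire difficulty of the proposition is to exploit the mixed structure ($L^\infty$ in $x$, $L^2$ in $v$) together with $\partial_{v_j}c_j=0$ in a duality or energy argument adapted to the anisotropy of $P_0$; you correctly identify this as the delicate point but then defer it to \cite{zk2} instead of resolving it. Two further inaccuracies: the commutator $[P_0,\varphi]h$ contains $-2\nabla_v\varphi\cdot\nabla_v h$, i.e.\ exactly the quantity to be controlled, so the reduction to compactly supported $h$ is not free (it can be repaired by rewriting this term as $\sum_j\partial_{v_j}\left((\partial_{v_j}\varphi)h\right)-(\Delta_v\varphi)h$, again of divergence type with $L^2$ data, but you only invoke a ``standard bootstrap''); and in the weighted calculus attached to $P_0$ the field $\partial_{x_j}$ has weight $3$, so it is not controlled with full gain by $\Vert P_0u\Vert+\Vert u\Vert$ (only a gain of order $2/3$ in $x$ is available), contrary to what you assert — this last point is harmless for the conclusion about $\nabla_v h$ but reflects the imprecision of the parametrix step.
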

We can now give the proof of Theorem \ref{thm2.1}.
\begin{proof}[Proof of Theorem \ref{thm2.1}] The proof is similar to that of Theorem A.2 in \cite{zk2}. In the following, 
we will only focus on the differences appearing  in our case.
To show the Sobolev regularity of the problem $ K^* \, f = g $ with $ f $ and $ g \in L^2_{loc} (\mathbb{R}^{2d}) $, we can reformulate the problem as follows ~:
\begin{align*}
\begin{cases}
P_0\,f= \sum\limits_{j=1}^{d} c_{j}(x,v)\,\partial_{v_j}\,h_j+\tilde{g}\\
h_j=f \in L^{2}_{loc}(\mathbb{R}^{2d})\,,\\
\tilde{g}=g-\frac{v^2}{4}\,f+\frac{d}{2}\,f\in L^{2}_{loc}(\mathbb{R}^{2d}),\,\forall j=1,...,d \,.
\end{cases}
\end{align*}
Here the coefficients $ c_j $ are defined by ~:
\begin{align*}
c_j(x,v)=-(v\wedge\,B_e)_{j} -\partial_{x_j}\,V\in L^{\infty}(\mathbb{R}^{d},L^{2}_{loc}(\mathbb{R}^{d}))\,,\forall j=1,...,d\,.
\end{align*}
We note that the coefficients $ c_j $ verify the condition \eqref{cond1} of the Proposition \ref{lem1} because
$$ \partial_{v_j}(v\wedge B_e)_j=0\, \text{ and  }\, \partial_{v_j}\,\partial_{x_j}\,V=0 \,,\,\forall j=1,..,d\,. $$
\end{proof}

\subsection{Proof of Theorem \ref{prop 2}}~\\
The accretivity of the operator $ K $ is clear.
To show that the operator ${\bf K}$ is maximally accretive, it suffices to show that there exists $ \lambda_{0}> 0 $ such that the operator\break  $ T = K + \lambda_{0} \, Id $ is of dense range  in $ L^2 (\mathbb{R}^{d}\times\mathbb{R}^{d}) $. As in \cite{HeNi}, we take $  \lambda_{0} = \frac{d}{2}+1 \,$. \\
We have to prove that if  $u \in L^2(\mathbb{R}^{d}\times\mathbb{R}^{d})$ satisfies
\begin{align}
\langle u,(K + \lambda_{0} Id)\,w \rangle =0, \quad \forall w \in D(K)\,,
\label{6-z}
\end{align}
then $ u = 0 $.

For this we observe that equation \eqref{6-z} implies that 
$$   K^*\,u=-(\frac{d}{2}+1)u\,\text{ in }\, \mathcal D'(\mathbb{R}^{d}\times\mathbb{R}^{d})\,,$$
where $K^*$ is the operator defined in \eqref{def_K*}.

 Under the assumption that $ B_e\in L^\infty_{loc} (\mathbb{R}^{d},\mathbb{R}^{d(d-1)/2}), V\in W^{1, \infty}_{loc} (\mathbb{R}^d) $ and $ u \in D (K^*) \subset  L^2_{loc} (\mathbb{R}^{d}\times\mathbb{R}^{d}) $,  Theorem \ref{thm2.1} shows that $ u \in \mathcal{H}^2_{loc}(\mathbb{R}^{d}\times\mathbb{R}^{d})$. So we have $\chi (x,v) u \in \mathcal{H}^2(\mathbb{R}^{d}\times\mathbb{R}^{d})$ for any $\chi\in C_0^\infty(\mathbb R^d\times \mathbb R^d)$. 
The rest of the proof is standard. The regularity obtained for $ u $ allows us to justify the integrations by parts and the cut-off argument given in \cite[Proposition 5.5]{HeNi}.
\begin{remark}One can also prove that the operator $K=K_{V,B_e}$ is maximally accretive by using the Kato perturbation theory   by an unbounded operator but this can only be done under the stronger assumption that $B_e\in  L^\infty (\mathbb{R}^{d},\mathbb{R}^{d(d-1)/2})$ and $V\in W^{1, \infty}(\mathbb{R}^d) $. Under this assumption, one can  prove that $\mathcal B=-\nabla_x V\cdot \nabla_v$ is $K_{B_e}$-bounded operator with a $K_{B_e}$-bound which  is strictly smaller than $1$.(cf. \cite[Section A.4]{zkphd}), where $K_{B_e}$ is the same operator with $V=0$ already studied in  \cite{zkphd}.
\end{remark}

\section{Proof of Theorem \ref{hypoelli0}}
\subsection{General strategy}
For technical reasons, it is easier to work with
$$
\check K:= K +\frac d2\,.
$$
It is clear that it is equivalent to prove the maximal estimate for $\check K$.\\

 The proof consists in constructing $ \mathcal{G} $, a graded and stratified algebra of type $2$, and, at any point $ x \in \mathbb{R}^{d} $, an homogeneous element $ \mathcal{F}_{x} $ in the enveloping algebra $ \mathcal{U} _ {2} (\mathcal{G}) $ which satisfies the Rockland condition. We recall that Helffer-Nourrigat's proof is based on maximal estimates which not only hold for the operator but also (and uniformly) for $\pi( \mathcal{F}_{x})$ where $\pi$  is any induced representation of the Lie Algebra. \\
 It remains to find $\pi_x$ such that $\pi_x(\mathcal F_x)=\mathcal K_x +\frac d2$ is a good approximation of ${\bf K}$ in a suitable ball centered at $x$  and 
 to patch together the estimates through a partition of unity. This strategy was used in \cite{HeNi} in the case $B=0$ and in \cite{zkphd,ZK} in the case of the torus $\mathbb T^d$ with $V=0$.\\
 Actually, we first define $\mathcal K_x$ and then look for the Lie Algebra, the operator and the induced representation.
\subsection{Maximal estimate} 
Let us present for simplification the approach when $d=2$ 
(but this restriction is not important). The dependence on $x$ will actually appear through the two parameters $(b,w)\in \mathbb R \times \mathbb R^2$ where 
$$
b=B_e(x) \mbox{ and } w =\nabla V(x)\,.
$$
We then consider the following model:
  \begin{align}
\mathcal K_x +1:=K_{w,b} = v\cdot \nabla_{x} -w.\nabla_v+b(v_1 \partial_{v_{2}} -v_2 \partial_{v_{1}}) -\Delta_v + v^{2}/4\,, \text{ in } \mathbb{R}^{2}\times\mathbb{R}^{2}.
\end{align}
We would like to have uniform estimates with respect to the parameters $b$,  $w$. This is the object of the following proposition.
\begin{prop}\label{propmax}
For any compact interval $I$, there exists a constant $C$ such that for any $b\in I$, any $w\in \mathbb R^2$, any $f\in \mathcal S(\mathbb R^4)$  we have the maximal estimate
\begin{equation}
 |w|^\frac 43 || f ||^2 +  |w|^\frac 23 ||\nabla_v f ||^2+ |w|^\frac 23 ||v f||^2 + \sum_{\vert \alpha \vert+\vert \beta \vert\leq 2} || v^\alpha \partial_v^\beta f ||^2 \leq C \left( ||f ||^2 + ||  K_{w,b} f  ||^2\right) \,.
\end{equation}
\end{prop}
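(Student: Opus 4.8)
The plan is to follow the nilpotent-group strategy sketched in the preceding subsection: realize the model operator $K_{w,b}$ as an induced representation $\pi_{w,b}(\mathcal F)$ of a single homogeneous element $\mathcal F$ in the enveloping algebra $\mathcal U_2(\mathcal G)$ of a fixed graded stratified Lie algebra $\mathcal G$ of type $2$, then invoke the uniform maximal estimates of Helffer--Nourrigat for such elements. First I would write $K_{w,b}$ in terms of the basic vector fields $X_j = \partial_{v_j}$ and $Y_j = v_j\partial_{x_j} - w_j + (\text{magnetic term})$, noting that $[X_j, Y_k]$ reproduces the remaining generators, so the family $\{X_j, Y_j\}$ together with their first brackets spans a nilpotent Lie algebra of step $2$; the weights are assigned so that $X_j$ has weight $1$, $v_j$ and the $x$-derivatives have weight $2$, and hence $K_{w,b}$ — which is $\sum X_j^2 + v^2/4$ plus first-order transport terms — is homogeneous of degree $2$. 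The parameters $(b,w)$ enter only through the coefficients of the degree-$\le 2$ part, so the corresponding abstract element $\mathcal F = \mathcal F_{b,w} \in \mathcal U_2(\mathcal G)$ varies continuously (indeed polynomially) with $(b,w)$ over the compact set $I\times\{|w|\le R\}$, while for $|w|$ large one rescales.

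The key step is the scaling argument that produces the non-trivial powers $|w|^{4/3}$, $|w|^{2/3}$. Introducing the anisotropic dilation adapted to $\mathcal G$ — $v \mapsto \lambda v$, $x\mapsto \lambda^{?} x$ with the exponents dictated by the stratification — and choosing $\lambda = |w|^{1/3}$, one checks that $K_{w,b}$ is conjugate, up to the dilation, to $|w|^{2/3}$ times an operator $K_{\omega, b}$ with $|\omega| = 1$ (the magnetic parameter $b$ stays in the compact interval $I$ throughout, which is why uniformity in $b\in I$ is needed). Thus it suffices to prove the estimate for $|w| = 1$ (and for $|w|\le 1$, absorbed into the right-hand side), where it reduces to a statement about a compact family of operators. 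For that compact family, the Rockland condition for $\pi_{w,b}(\mathcal F)$ — i.e. injectivity of $\pi(\mathcal F)$ on smooth vectors for every non-trivial irreducible unitary representation $\pi$ — is exactly what Helffer--Nourrigat's theorem converts into the maximal estimate $\sum_{|\alpha|+|\beta|\le 2}\|v^\alpha\partial_v^\beta f\|^2 + (\text{weight-2 terms}) \le C(\|f\|^2 + \|K_{w,b}f\|^2)$, uniformly over the parameters. Checking the Rockland condition amounts to the observation that $K_{w,b}$ is, fibrewise over $x$ (after partial Fourier transform in $x$), a shifted harmonic-type oscillator in $v$ with a first-order magnetic perturbation, hence has trivial kernel on Schwartz functions; the representation theory of the step-$2$ group packages all the $x$-fibres and boundary strata at once.

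The main obstacle I expect is twofold. First, identifying the \emph{correct} Lie algebra $\mathcal G$ and weights so that the reference operator $\mathcal F$ is genuinely homogeneous of degree $2$ and the induced representation recovers $K_{w,b}$ on the nose — this is precisely the point flagged in the introduction as "the reference operator in the enveloping algebra is different" from \cite{HeNi} and \cite{zk2}, because the simultaneous presence of $w\cdot\nabla_v$ and $b(v_1\partial_{v_2}-v_2\partial_{v_1})$ forces a larger or differently graded algebra than either special case. Second, verifying the Rockland condition \emph{uniformly}, including at the degenerate representations sitting over the directions where $w$ or its rescaled companion points — one must show the oscillator-type operator remains injective there, which is where the constraints $\rho_0 > 1/3$ and $\gamma_0 < 1/3$ will ultimately be used (they guarantee that, after rescaling by $|w|^{1/3}$, the error terms from the variation of $B_e$ and $\nabla^2 V$ are lower order and do not spoil the model estimate). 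Once Proposition \ref{propmax} is in hand, the passage to Theorem \ref{hypoelli0} is the standard partition-of-unity patching, using $b = B_e(x)$, $w = \nabla V(x)$ in a ball around each $x$ and summing the localized estimates.
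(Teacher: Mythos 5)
Your plan has a genuine gap at its central step: the reduction to $|w|=1$ by an anisotropic dilation does not exist for this operator. Any dilation $v\mapsto \lambda v$, $x\mapsto \mu x$ sends $-\Delta_v$ to $\lambda^2(-\Delta_v)$ but $v^2/4$ to $\lambda^{-2}v^2/4$, so the harmonic-oscillator part of $K_{w,b}$ is not homogeneous under dilations of $\mathbb R^4$ and $K_{w,b}$ cannot be conjugated to $|w|^{2/3}K_{\omega,b}$ with $|\omega|=1$. (The homogeneity used in the nilpotent approach is of a different nature: $\partial_{v_j}$ and the multiplication operators $iv_j$ both have weight $1$ as generators of the first stratum, and this grading is not implemented by a genuine dilation acting on functions; also the algebra is stratified of type $2$ but nilpotent of rank $3$, not step $2$ as you state.) For the same reason you cannot put $w$ into the coefficients of an element $\mathcal F_{b,w}\in\mathcal U_2(\mathcal G)$: the term $w\cdot\nabla_v$ has degree $1$, so $\mathcal F_{b,w}$ would not be homogeneous of degree $2$, and since $w$ ranges over all of $\mathbb R^2$ this lower-order term cannot be absorbed, so Helffer--Nourrigat gives nothing uniform in $w$. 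The paper's mechanism is different: after the partial Fourier transform $x\to\xi$ one conjugates by a metaplectic rotation in each $(v_k,\eta_k)$ plane, chosen so that $-w\cdot\nabla_v+iv\cdot\xi$ becomes $iv\cdot\rho$ with $\rho_k=\sqrt{w_k^2+\xi_k^2}$; the harmonic oscillator is invariant, while the magnetic term acquires the extra piece $ib_2'(v_1v_2-\partial_{v_1}\partial_{v_2})$ (this is why the reference operator differs from \cite{HeNi} and \cite{zk2}). The unbounded parameter is thereby encoded in the linear form $\ell_\rho$ defining the induced representation, i.e.\ in the representation rather than in the reference element $\mathcal F_{b'}$, and the uniformity over all induced representations provided by the Helffer--Nourrigat theorem is exactly what yields an estimate uniform in $\rho$ (hence in $w$ and $\xi$), locally uniform in $b'$.

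Two further points are missing even granting a uniform maximal estimate for the model. First, the weights $|w|^{4/3}$ and $|w|^{2/3}$ do not come from the Rockland machinery alone: the paper obtains $|\rho|^{4/3}\|u\|^2\le \check C\,\|(-\Delta_v+iv\cdot\rho)u\|^2$ from the auxiliary estimate (V5.52) of \cite{HeNi}, combines it with the nilpotent estimate, uses $|\rho|\ge|w|$ when returning to the original coordinates, and then gets the mixed terms $|w|^{2/3}\|\nabla_v f\|^2$ and $|w|^{2/3}\|vf\|^2$ by complex interpolation; your sketch offers no substitute for these steps once the scaling argument is withdrawn. Second, the hypotheses $\rho_0>\frac13$ and $\gamma_0<\frac13$ are irrelevant to Proposition \ref{propmax}, which concerns the frozen-coefficient model with constant $(w,b)$; they enter only in Step 2 of the proof of Theorem \ref{hypoelli0}, when the errors $(\nabla V(x)-w_j)\cdot\nabla_v$ and $(B_e(x)-b_j)(v_1\partial_{v_2}-v_2\partial_{v_1})$ produced by freezing the coefficients on the partition of unity are absorbed. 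Invoking them inside the proof of the proposition signals a confusion between the model estimate and the patching argument.
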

\subsection{Proof of Proposition \ref{propmax}. Step 1}~\\
Following \cite{HeNi}, we consider, after application of 
 the $x\rightarrow \xi$ partial Fourier transform, the family indexed by $w,b,\xi$:
\begin{align}\label{model_1}
\hat K_{w,b,\xi} = i v\cdot \xi -w.\nabla_v+b(v_1 \partial_{v_{2}} -v_2 \partial_{v_{1}}) -\Delta_v + v^{2}/4 , \text{ in } \mathbb{R}^{2}\times\mathbb{R}^{2},
\end{align}
We denote by $\sigma$ the symbol of the operator $\hat K_{w,b,\xi}$ considered as acting in $L^2 (\mathbb  R^2_v)$:
\begin{align*}
\sigma (v,\eta)=i \xi \cdot v -i w\cdot \eta -i b(v_1 \eta_2 -v_2 \eta_1)+\eta^2 +v^2/4.
\end{align*}
We introduce the symplectic map on $\mathbb R^4$ associated with the matrix 
$$A=\begin{pmatrix}
\cos t_1 &-\sin t_1&0&0\\
\sin t_1 &\cos t_1 &0&0\\
0&0&\cos t_2 &-\sin t_2\\
0&0&\sin t_2 &\cos t_2
\end{pmatrix}.$$
Then there exists (see \cite{HeNi} for an explicit definition) an associate unitary metaplectic operator $\tilde{T}$ acting in $L^2 (\mathbb  R^4)$ such that
\begin{equation}\label{conj}
\begin{array}{l}
 \tilde{T}^{-1}\, \hat K_{w,b, \xi}\,\tilde{T}\\
 \quad =  i v\cdot \xi'(t)-w'(t) \cdot \nabla_v+b'_1(t) (v_1 \partial_{v_{2}} -v_2 \partial_{v_{1}})  + i  b'_2(t) (v_1v_2 -\partial_{v_1}\partial_{v_2})  -\Delta_v + v^{2}/4 \,,
 \end{array}
 \end{equation}
with $t=(t_1,t_2)\in \mathbb  R^2$
\begin{align}
 \begin{cases}
w'_k (t)=w_k \cos t_k -\xi_k \sin t_k\\
\xi'_k (t)=w_k \sin t_k +\xi_k \cos t_k\\
b'_1(t)=b\cos (t_2-t_1)\\
b'_2(t)=b\sin (t_2-t_1)
\end{cases}
\label{changes_variables}
\end{align}
for $k=1,2$.\\
 We now choose $t_k$ so that 
$$w'_k (t)=0 \text{ and } \xi'_k (t)=\sqrt{w_k^2+\xi_k^2}:=\rho_k. $$
With this choice  of $t_k$, this leads by \eqref{conj} to the analysis of a maximal estimate for
\begin{align}\label{model_2}
\check K_{\rho,b'}=i  \,v\cdot \rho+ b'_1 (v_1 \partial_{v_2}-v_2\partial_{v_1})+i b'_2(v_1v_2 -\partial_{v_1}\partial_{v_2})-\Delta_v + v^{2}/4 \,,
\end{align}
 which is considered, for fixed $(\rho,b')$, as an operator on $\mathcal S(\mathbb R^2)$.\\   We notice that $|b'|=|b|$. Hence, if $b$ belongs to a compact interval, then $b'$ belongs to a compact set in $\mathbb R^2$.\\
It remains to show that  for a suitable Lie Algebra $\mathcal G$, this is the image by an induced representation $\pi_{\rho}$ of an element $ \mathcal F_{b'} $ satisfying the Rockland condition.

\subsection{Nilpotent techniques for the analysis of $\check K_{\rho,b'}$.}~\\
We construct a graded Lie algebra $ \mathcal G $ of type 2, a subalgebra $ \mathcal{H} $,  for $b'\in \mathbb R^2 $ an element $ \mathcal F_{b'} $ in $ \mathcal U_2 (\mathcal G) $,  and for $\rho \in \mathbb R^2$ a linear form $ \ell_\rho\in \mathcal G^*$ such that $\ell_\rho ([\mathcal H,\mathcal H|)=0 $ and 
 $$ 
  \pi_{\ell_\rho, \mathcal{H}} (\mathcal F_{b'})=\check K_{\rho,b'}\,.
  $$
$ \check K_{\rho,b'}$ can be written as a  $\rho$-independent polynomial of  five differential operators  
  \begin{align}\label{ecriture 17}
\check K_{\rho,b'} &=X_{1,2}-\sum_{k=1}^{2}\, \left( (X_{k,1}^{'})^{2} +\frac{1}{4} (X_{k,1}^{''})^{2} )    \right)\notag\\
&\quad -ib'_1\left( X_{1,1}^{'}X^{''}_{2,1}-X_{2,1}^{'}X^{''}_{1,1}\right)-ib'_2 \left(X^{''}_{1,1}X^{''}_{2,1}+X'_{1,1}X'_{2,1} \right),
\end{align}
which are given by 
  \begin{equation}
 X_{1,1}^{'}=\partial_{v_1} \,,\, X_{1,1}^{''}=iv_{1}\,,\, 
X_{2,1}^{'}=\partial_{v_2} \,,\,  X_{2,1}^{''}=iv_2 \,,\,X_{1,2}=i \,v\cdot\rho\,.
\end{equation}
We now look at the Lie algebra generated by these five operators and their brackets. This leads us to introduce three new elements that verify the following relations  :
\begin{align*}
& X_{2,2}:=[X_{1,1}^{'},X_{1,1}^{''}]=[X_{2,1}^{'},X_{2,1}^{''}]=i\,,\,\\
& X_{1,3}:=[X_{1,2},X_{1,1}^{'}]=-i\rho_1 \,,\, X_{2,3}:=[X_{1,2},X_{2,1}^{'}]=-i\rho_2.
\end{align*}
We also observe that we have the following properties:
\begin{align*}
 &[X_{1,1}^{'},X_{2,1}^{'}]=[ X_{1,1}^{''},X_{2,1}^{''}]=0\,, && \\
 &[ X_{j,1}^{'},X_{k,3}]=[ X_{j,1}^{''},X_{k,3}]=[ X_{k,3},X_{2,2}]=...=0&\,,\, \forall j,k=1,2\,.&
 \end{align*}
 
 We then construct a graded Lie  algebra $ \mathcal{G} $ verifying the same commutator relations. More precisely, $ \mathcal{G} $ is stratified of type $ 2 $, nilpotent of rank $ 3 $, its underlying vector space is $ \mathbb{R}^{8} $, and $ \mathcal{G}_{1} $ is generated by  four elements $ Y_{1,1}^{'}, Y^{'}_{2,1}, Y_{1,1}^{''} $ and $ Y^{'' }_{2,1} $, $ \mathcal{G}_{2} $ is generated by $ Y_{1,2} $ and $ Y_{2,2} $ and $ \mathcal{G}_{3} $ is generated by $ Y_{1,3} $ and $ Y_{2,3} $.
 \begin{align*}
& Y_{2,2}:=[Y_{1,1}^{'},Y_{1,1}^{''}]=[Y_{2,1}^{'},Y_{2,1}^{''}], \,\\
& Y_{1,3}:=[Y_{1,2},Y_{1,1}^{'}] \,,\, Y_{2,3}:=[Y_{1,2},Y_{2,1}^{'}]\,,
\end{align*}
and 
\begin{align*}
 &[Y_{1,1}^{'},Y_{2,1}^{'}]=[ Y_{1,1}^{''},Y_{2,1}^{''}]=0\,, && \\
 &[ Y_{j,1}^{'},Y_{k,3}]=[ Y_{j,1}^{''},Y_{k,3}]=[ Y_{k,3},Y_{2,2}]=...=0&\,,\, \forall j,k=1,2\,.&
 \end{align*} We note that $\mathcal{G}$ is the same graded Lie algebra given in \cite{zk2} and is indeed independent of the parameters $(\rho, b')$.

 We have to check that for a given $\rho=(\rho_1,\rho_2)$ the representation $ \pi $ (with the convention that if $ \diamond = \emptyset $ there is no exponent) defined on its basis by 
 \begin{align}
\pi(Y_{i,j}^{\diamond})= X_{i,j}^{\diamond}\text{ with }i=1,2,\, j=1,2,3\text{ and }\diamond\in \{ \emptyset, \, \prime, \prime\prime\}.
\end{align}
defines an induced representation of the Lie algebra $ \mathcal{G} $. \\
By applying the same steps given in \cite[page 11]{zk2} (following the techniques of \cite{helffer1980hypoellipticite}), 
 we  actually obtain $ \pi = \pi_{\ell, \mathcal{H}} $ with
  \begin{equation}
 \mathcal{H}
  = \mathrm{Vect} (Y_{1,1}^{''}, Y_{2,1}^{''}, Y_{1,2}, Y_{2,2}, Y_{1,3}, Y_{2,3}), 
  \end{equation}
  and $\ell_\rho  \in \mathcal G^*$ defined by $0$ for the elements of the basis of $\mathcal G$  except 
  \begin{equation}
  \ell_\rho (Y_{1,3})=-\rho_1\,, \,\ell_\rho (Y_{2,3})=- \rho_2 \mbox{ and }  \ell_\rho (Y_{2,2})= 1\,.
 \end{equation}
With  \eqref{ecriture 17} in mind, we introduce
\begin{align}
\mathcal F_{b'}=Y_{1,2}&-\sum_{k=1}^{2}\, \left( (Y_{k,1}^{'})^{2} +\frac{1}{4} (Y_{k,1}^{''})^{2} )    \right)\\
&-ib'_1\left( Y_{1,1}^{'}Y^{''}_{2,1}-Y_{2,1}^{'}Y^{''}_{1,1} \right)-ib'_2 \left(Y^{''}_{1,1}Y^{''}_{2,1}+Y'_{1,1}Y'_{2,1} \right)\notag
\end{align}
 and get
\begin{equation}
\pi_{\ell_\rho, \mathcal{H}}(\mathcal F_{ b'}) =\check K_{\rho,b'}\,.
\end{equation}
The verification of the Rockland condition is the same as  in \cite{zk2}. For any non trivial  irreducible representation $\pi$, we consider a $C^\infty$-vector of the representation such that  $\pi (\mathcal F_{b'}) u =0$ and write
$\Re \langle \pi (\mathcal F_{b'}) u \,,\, u\rangle =0$. By using that the operator $ \pi (Y) $ is a formally skew-adjoint operator for all $Y\in \mathcal{G}$ (see \cite[Proposition 2.7]{zk2}), we first get that $\pi(Y) u =0$ for any $Y\in \mathcal G_1$ and by difference $\pi(Y_{1,2}) u=0$.\\
Observing that $\mathcal G$ is the algebra generated by $\mathcal G_1$ and $Y_{12}$, we get $\pi(Y) u =0$ for any $Y\in \mathcal G$, which implies, $\pi$ being non trivial,  that $u=0\,$.\\
 Therefore, according to Helffer-Nourrigat theorem \cite{helffer1978hypoellipticite}  the operator $ \mathcal F_{b'} $ is maximal hypoelliptic and this implies also that $\pi ( \mathcal F_{b'} )$ satisfies a maximal estimate for any induced representation $\pi$ with a constant which is independent of $\pi$.
By applying this argument to  $ \check K_{\rho, b'} = \pi_{\ell, \mathcal{H}} (\mathcal F_{ b'} $), we obtain for any compact $K_0 \subset \mathbb R^2$ the existence of $ C> 0 $ such that, $\forall b'\in K_0$, $\forall \rho \in \mathbb R^2$ and 
$\forall u \in \mathcal S(\mathbb{R}^{2})$,
\begin{align}\label{ing 11}
&\Vert X_{1,2} u\Vert^2 +\sum_{k=1}^{2} \, \left(\Vert (X_{k,1}^{'})^{2}u\Vert^2 +\Vert (X_{k,1}^{''})^{2}u\Vert^2\right) +\sum_{k,\ell=1}^{2} \Vert X^{'}_{k,1}X^{''}_{\ell, 1} u \Vert^2 \notag \\
& \quad  \quad  \qquad  \leq C\, \left( \Vert \check K_{\rho, b'}u\Vert^2+\Vert u \Vert^2 \right) \,.
\end{align}
 Notice here that we first prove the inequality for fixed $b'$ and then show the local uniformity with respect to $b'$.\\
In particular, we have
\begin{equation}\label{ing11a}
\Vert (v\cdot \rho) \,  u\Vert^2 + \sum_{\vert \alpha \vert+\vert \beta \vert\leq 2} || v^\alpha \partial_v^\beta u ||^2  \leq C\, \left( \Vert \check K_{\rho, b'}u\Vert^2+\Vert u \Vert^2 \right) \,.
\end{equation}
Using the treatment of the operator introduced in (V5.52) in \cite{HeNi}, there exists $\check C$, such that, $\forall \rho \in \mathbb R^2$ and 
$\forall u \in \mathcal S(\mathbb{R}^{2})$, we have 
\begin{equation}\label{ing11b}
|\rho|^{\frac 43} || u||^2 \leq \check C ( || (-\Delta + v\cdot \rho) u )||^2)\,.
\end{equation}
Combining \eqref{ing11a} and \eqref{ing11b}, we finally obtain,  for any compact $K_0 \subset \mathbb R^2$ the existence of $ \hat C> 0 $ such that, $\forall b'\in K_0$, $\forall \rho \in \mathbb R^2$ and 
$\forall u \in \mathcal S(\mathbb{R}^{2})$,

\begin{equation}\label{inegmaxhom}
\Vert (v\cdot \rho) \,  u\Vert^2 +  |\rho| ^\frac 43 || u ||^2 + \sum_{\vert \alpha \vert+\vert \beta \vert\leq 2} || v^\alpha \partial_v^\beta u ||^2  \leq \hat C  \left( || \check K_{\rho,b'}   u ||^2+\Vert u\Vert^2\right) \;.
\end{equation}

\subsection{End of the proof of Proposition \ref{propmax}}
Coming back to the initial coordinates, we get for a new constant $C>0$, 
\begin{equation}
||f ||^2 + || \hat K_{w,b, \xi} f  ||^2 \geq \frac 1C \left( |w|^\frac 43 || f ||^2 +  \sum_{\vert \alpha \vert+\vert \beta \vert\leq 2} || v^\alpha \partial_v^\beta f ||^2\right)\;, \forall f \in \mathcal{S}(\mathbb  R^2),
\end{equation}
Note that by complex interpolation, this implies also for $f \in \mathcal{S} (\mathbb  R^2)\;:$
\begin{equation}\label{maxi_est}
||f ||^2 + || \hat K_{w,b, \xi} f  ||^2 \geq \frac 1{\tilde C} \left(  |w|^\frac 23 ||v f ||^2 +  |w|^\frac 23 ||\nabla_v f ||^2+\sum_{\vert \alpha \vert+\vert \beta \vert\leq 2} || v^\alpha \partial_v^\beta f ||^2\right)\,.
\end{equation}
This achieves the proof of the proposition.

\subsection{Proof of Theorem \ref{hypoelli0}}
\subsubsection{Step 1: construction  of the partition of unity.}~\\
We start with the following lemma
\begin{lem}
Let $V$ satisfies \eqref{hyp2/3} for some $\rho_0 \in (\frac 13,1)$ and, for $s>0$, let us define
\begin{equation}
r(x):= <\nabla_x V (x)>^{-s}\,.
\end{equation}
Then there exists $\delta_0 >0$ such that if $|x-y| \leq \, \delta_0  r(x) $ then $$\delta_0 \leq \frac{r(x)}{r(y)} \leq \frac 1{\delta_0}\,.$$
\end{lem}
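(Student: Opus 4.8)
The statement is a standard ``slowly varying metric'' estimate, so the plan is to reduce everything to controlling $\nabla_x r$, i.e. to controlling the logarithmic derivative of $\langle \nabla_x V\rangle$. First I would compute
\[
\nabla_x r(x) = -s\,\langle \nabla_x V(x)\rangle^{-s-2}\,\big(\mathrm{Hess}\,V(x)\big)\,\nabla_x V(x),
\]
so that, using the Cauchy--Schwarz inequality and hypothesis \eqref{hyp2/3} (which bounds each second derivative of $V$ by $C\langle \nabla V\rangle^{1-\rho_0}$, hence $|\mathrm{Hess}\,V(x)|\le C'\langle\nabla V(x)\rangle^{1-\rho_0}$), one gets
\[
|\nabla_x r(x)| \le C''\,\langle \nabla_x V(x)\rangle^{-s-2}\,\langle\nabla V(x)\rangle^{1-\rho_0}\,\langle\nabla V(x)\rangle
   = C''\,\langle \nabla_x V(x)\rangle^{-s-\rho_0}
   = C''\,r(x)\,\langle\nabla_x V(x)\rangle^{-\rho_0}.
\]
Since $\rho_0>0$, this shows $|\nabla_x r(x)| \le C'' r(x)$, and in fact $|\nabla_x r| \to 0$ where $|\nabla V|$ is large; but the uniform bound $|\nabla r|\le C''$ (note $r\le 1$) is all I need.

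\textbf{From the gradient bound to the conclusion.} Fix $x$ and $y$ with $|x-y|\le \delta_0 r(x)$, where $\delta_0>0$ is to be chosen. I would apply the mean value theorem to $r$ along the segment $[x,y]$. The only subtlety is that the bound $|\nabla r|\le C'' r$ is pointwise, so I first want to know $r$ does not drop too fast along the segment; this is the usual Gronwall-type bootstrap. Concretely, let $\phi(t)=r(x+t(y-x))$ for $t\in[0,1]$; then $|\phi'(t)|\le C''|x-y|\,\phi(t)\le C''\delta_0 r(x)\,\phi(t)/|x-y|\cdot|x-y| \le C''\delta_0\, r(x)\,\phi(t)$... more cleanly, $|\phi'(t)/\phi(t)|\le C''|x-y|\le C''\delta_0\, r(x)\le C''\delta_0$ since $r\le 1$. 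Wait — I must be careful: $|x-y|\le\delta_0 r(x)$ and $r(x)\le 1$ give $|x-y|\le\delta_0$, so indeed $|(\log\phi)'(t)|\le C''|x-y|\le C''\delta_0$. Integrating from $0$ to $1$ yields $e^{-C''\delta_0}\le r(y)/r(x)\le e^{C''\delta_0}$. Now choose $\delta_0>0$ small enough that simultaneously $e^{C''\delta_0}\le 1/\delta_0$ and $e^{-C''\delta_0}\ge \delta_0$ — e.g. any $\delta_0$ with $\delta_0 e^{C''\delta_0}\le 1$ works, and such $\delta_0\in(0,1)$ exists. This gives exactly $\delta_0 \le r(x)/r(y)\le 1/\delta_0$.

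\textbf{Main obstacle.} There is no deep obstacle; the only point requiring a little care is the circularity in using a bound of the form $|\nabla r|\le C'' r$ when estimating the variation of $r$ over a ball whose radius is itself proportional to $r(x)$ — this is resolved by the one-line Gronwall / logarithmic-derivative argument above (or, if one prefers to avoid even that, by first establishing the crude bound $|\nabla r|\le C''$ using $r\le1$, which already suffices once $|x-y|\le\delta_0 r(x)\le\delta_0$, at the cost of a slightly weaker but still adequate two-sided inequality). One should also check that $r$ is $C^1$: this holds because $V\in C^2$ by \eqref{hyp2/3} (the hypothesis bounds genuine second derivatives), so $\nabla_x V\in C^1$ and $\langle\nabla_x V\rangle^{-s}$ is a $C^1$ function of it. I would present the logarithmic-derivative version since it gives the clean constants matching the statement, and remark that the admissible $\delta_0$ depends only on $s$, $\rho_0$ and the constant $C$ of \eqref{hyp2/3}.
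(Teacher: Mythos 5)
Your argument is correct, and it reaches the conclusion by a route that differs in mechanics from the paper's. The paper never differentiates $r$: it introduces $\hat v(x,c)=\sup_{|x-y|\le c r(x)}\langle\nabla V(y)\rangle$, uses the Hessian bound \eqref{hyp2/3} in a mean-value estimate to get the self-improving inequality $\hat v/\langle\nabla V(x)\rangle\le 1+C_0c\,(\hat v/\langle\nabla V(x)\rangle)^{1-\rho_0}$, deduces an upper bound $C_1(c)$ for the ratio, and then runs a separate second step (choosing $c$ small, using the already-proved upper bound) to obtain the reverse inequality. You instead differentiate $r=\langle\nabla V\rangle^{-s}$ directly, observe that \eqref{hyp2/3} gives the uniform logarithmic-gradient bound $|\nabla\log r|\le sC'\langle\nabla V\rangle^{-\rho_0}\le C''$, and integrate along the segment; this yields both inequalities at once via $|\log(r(y)/r(x))|\le C''\delta_0$, with the choice $\delta_0 e^{C''\delta_0}\le 1$ closing the loop, and there is indeed no circularity since the bound $|\nabla r|\le C'' r$ is pointwise and uniform. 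Your version is more economical and in fact gives a slightly stronger statement (slow variation of $r$ on balls of any fixed radius, not only of radius $\delta_0 r(x)$), at the modest price of requiring $r$ to be differentiable along segments — essentially the same pointwise second-order differentiability of $V$ that the paper's mean-value step already uses, so nothing is lost; the paper's sup-and-bootstrap formulation is the one that matches the Hörmander slowly-varying-metric framework it subsequently cites for the partition of unity, but both proofs rest on the same key input, namely that \eqref{hyp2/3} controls the variation of $\langle\nabla V\rangle$ at scale $r(x)$.
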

\begin{proof}
For $c >0$, let $\hat v(x,c):= \sup_{|x-y| \leq c r(x)}  < \nabla V (y) >$.  Using our assumptions, there exists $C_0$, such that, for any $c>0$, we have
$$
\hat  v  (x,c) \leq <\nabla V(x)>  + C_0 \, c \, r(x) \hat v(x,c) ^{1-\rho_0} \,.
$$
Hence
$$
\begin{array}{ll}
\frac{\hat v  (x,c)}{<\nabla V(x)>}  & \leq 1   + C_0 \, c  \, <\nabla V(x)>^{-1 - s}   \hat v(x,c) ^{1-\rho_0} \\
& \leq  1 + C_0 \, c  \, (\frac{\hat v  (x,c)}{<\nabla V(x)>})^{1-\rho_0}   \,.
\end{array}$$
This inequality implies  for all $c >0$  the existence of $C_1(c)$ such that, $\forall x \in \mathbb R^2$,
$$
1 \leq  \frac{\hat v  (x,c)}{<\nabla V(x)>} \leq C_1(c) \,,
$$
and in particular:
$$
 \frac{<\nabla V (y)>}{<\nabla V(x)>} \leq C_1(c) \,, \mbox{ for } |x-y|\leq c  r(x)\,.
$$
To get the reverse inequality, we fix some $c_0$ and consider $c \in (0,c_0)$. We then get for $|x-y|\leq c r(x)|$, 
$$
\begin{array}{ll}
<\nabla V(x) > &\leq <\nabla V(y )>  + C_0\,  c\, | \nabla V(x)|^{-s} \, C_1(c_0)\,  |\nabla V(x)| ^{1-\rho_0}\\
& \leq  <\nabla V(y )>  + C_0 \, c  \, C_1(c_0)\, |\nabla V(x)|  \,.
\end{array}
$$
Choosing $c$ small enough, we obtain 
$$
<\nabla V(x) > \leq (1- C_0 \,c\, C_1(c_0))^{-1}  |\nabla V(y)|  \,.
$$
Hence we have found $c_1 \in (0,c_0)$ and $C_2 >1$ such that $|x-y|\leq c_1 r(x)|$,
\begin{equation}
\frac{1}{C_2}\leq  \frac{<\nabla V (y)>}{<\nabla V(x)>} \leq C_2 \,.
\end{equation}
It is then easy to get the lemma for $\delta_0 \in (0,c_1)$ small enough.
\end{proof}
The parameter $\delta_0$ is now fixed by the lemma and we now consider $\delta \in (0, \delta_0)$ and  $r(x,\delta)=\delta \,  r(x)$. According to Lemma 18.4.4  (and around this lemma in Section 8.4)  in  \cite{Hor2},   one can now introduce,  for any $\delta \in (0,\delta_0]$,   a $\delta$-dependent partition of unity $\phi_j$ in the $x$ variable corresponding to a covering by balls $B(x_j, r(x_j,\delta))$ (with the property of uniform finite
 intersection $N_\delta$) 
  where, for $r>0$  and $\hat x \in \mathbb R^2$, the ball $B(\hat x, r)$ is defined by  
$B(\hat x , r):=\{x\in \mathbb{R}^{2}\,/\, |x-\hat x|<r \}\,. $\\ 
So the support of each $\phi_j$ is contained in  $B(x_j, \delta r(x_j))$ and
 we have
\begin{equation}
\sum_j \phi_j^2 (x) =1\,,
\end{equation}
and 
\begin{equation} 
|\nabla \phi_j (x)| \leq C_\delta  <\nabla V (x_j)>^{s } \leq \hat C_\delta  <\nabla V (x)>^{s }\;.
\end{equation}
This implies (using the finite intersection property) 
\begin{equation} 
\sum_j |\nabla \phi_j (x)|^2 \leq \check C_\delta   <\nabla V (x)>^{2s }\;.
\end{equation}

\subsubsection{Step 2}~\\
The proof is inspired by the Chapter 9 of \cite{HeNi} but different due to the presence of the magnetic field and the absence of the assumption $|\nabla V(x) | \rightarrow +\infty$ at infinity.\\
  We start, for $u\in C_0^\infty(\mathbb  R^2 \times \mathbb  R^2)\,$,  from
\begin{align*}
|| \check K u ||^2 &= \sum_j || \phi_j \check K u ||^2
\\
& = \sum_j || \check K \phi_j u||^2 - \sum_j ||[\check K,\phi_j] u ||^2\\
& = \sum_j || \check K \phi_j u||^2 - \sum_j || (X_0 \phi_j) u ||^2\\
& = \sum_j || \check K \phi_j u||^2 -
\sum_j || (\nabla \phi_j)  \cdot v \, u ||^2\\
& \geq \sum_j || \check K \phi_j u||^2 -\check C_\delta || <\nabla V>^{s} v u ||^2\;.
\end{align*}
For the analysis of $ || \check K \phi_j u||^2 $ let us now write, with $w_j=\nabla V(x_j)$ and $b_j=B_e (x_j)$,
$$
\check K = \check K-K_{w_j,b_j} + K_{w_j,b_j}\,.$$
We verify that, by using the construction of the partition of unity and the assumptions of Theorem \ref{hypoelli0}
$$
\begin{array}{lll}
|| (\check K-K_{w_j, b_j}) \phi_j u ||^2 & \leq 
2 || \phi_j (x) (\nabla V(x) - w_j) \cdot \nabla_v u||^2+2 || \phi_j(x) (B_e(x) - b_j) (v_1\partial_{v_2}-v_2\partial_{v_1})  u||^2\\
&  \leq C \delta^2 \left(  ||\,|\nabla V|^{1-\rho_0 -s} 
\phi_j \nabla_v u ||^2  + ||\,|\nabla V|^{ - s+\gamma_0} 
\phi_j (v_1\partial_{v_2}-v_2\partial_{v_1}) u ||^2\right)\;.
\end{array}
$$
These errors have to be controlled by the main term.

We note that by using the inequality \eqref{maxi_est}, we have
\begin{align*}
|| \phi_j u||^2+ ||K_{w_j,b_j} \phi_j u ||^2 &\geq \frac 1C || \,|\nabla V|^\frac 23  \phi_j u ||^2
 + \frac 1C ||\, |\nabla V |^\frac 13 \phi_j \nabla_v u ||^2 \\
 &\quad  +
 \frac 1C  || \,|\nabla V |^\frac 13 \phi_j v  u ||^2+\frac 1C \sum_{\vert \alpha \vert+\vert \beta \vert\leq 2} || v^\alpha \partial_v^\beta \phi_j  u ||^2\;.
\end{align*}
Finally, we observe that
$$ 
||\check K\phi_ju||^2 \geq \frac 12 || K_{w_j,b_j} \phi_ju||^2 -  ||(\check K- K_{w_j,b_j})\phi_j u||^2
\;.
$$
We now choose
\begin{equation}
\max (\frac 23 -\rho_0, \gamma_0)<s<\frac13\,.
\end{equation}
Summing up over $j$, we have obtained the existence of  a constant $C$ such that for all $u\in C_0^\infty (\mathbb  R^2 \times \mathbb  R^2)$ 
\begin{align}\label{ing_31}
||u||^2+ || \check Ku ||^2 
&\geq \frac 1C ||\, |\nabla V |^\frac 23 \,u \, ||^2
 + \frac 1C  ||\, |\nabla V |^\frac 13 \,\nabla_v u \,||^2
 +  \frac 1C  || \, |\nabla V |^\frac 13\, v u \, ||^2 \notag \\
 &\qquad \qquad +\frac 1C \sum_{\vert \alpha \vert+\vert \beta \vert\leq 2} || v^\alpha \partial_v^\beta u ||^2 \notag
\\ 
&\quad 
 - C \delta^2  || \,  |\nabla V |^{1-s-\rho_0} \, \nabla_v u \, ||^2 -C \delta^2  ||\,|\nabla V|^{ - s+\gamma_0} (v_1\partial_{v_2}-v_2\partial_{v_1}) u ||^2\\\notag
&\,\,- C \left(
\int_{\mathbb  R^d}\int_{\Omega (R)} |\nabla V |^{2s} \, |\nabla_v u |^2\,dxdv+\int_{\mathbb  R^d}\int_{\Omega (R)^c} |\nabla V|^{2s} \, |\nabla_v u |^2\,dxdv\right)\\\notag
& - \hat C  \left(
\int_{\mathbb  R^d}\int_{\Omega (R)} |\nabla V |^{2s} \,  |v u |^2\,dxdv+\int_{\mathbb  R^d}\int_{\Omega (R)^c} |\nabla V |^{2s} \, |v u |^2\,dxdv\right)\\\notag
&\quad -C \delta^2  (||vu||^2 + ||u||^2 + ||\nabla_v u||^2)
\end{align}
where $\Omega (R)$ is defined by
$\Omega (R)=\{x\in \mathbb  R^d \,/\, |\nabla V(x)|<R\}$
and $\Omega (R)^c:=\mathbb  R^d\setminus\Omega (R)$ is the complement of $\Omega (R)$ in $\mathbb  R^d$. \\
Using the definition of $\Omega (R)$ and the assumption that $s<\frac{1}{3}$, we obtain
\begin{align*}
&\int_{\mathbb  R^d}\int_{\Omega (R)} |\nabla V| ^{2s} \, |\nabla_v u |^2\,dxdv \leq R^{2s} \int_{\mathbb  R^d}\int_{\Omega (R)} \, |\nabla_v u |^2\,dxdv,\\
&\int_{\mathbb  R^d}\int_{\Omega (R)^c} |\nabla V| ^{2s} \, |\nabla_v u |^2\,dxdv \leq R^{2s -\frac 23} \int_{\mathbb  R^d}\int_{\Omega (R)^c} |\nabla V| ^{\frac23} \, |\nabla_v u |^2\,dxdv\,,
\end{align*}
and
\begin{align*}
&\int_{\mathbb  R^d}\int_{\Omega (R)} |\nabla V| ^{2s} \, |v u |^2\,dxdv \leq R^{2s} \int_{\mathbb  R^d}\int_{\Omega (R)} \, |v u |^2\,dxdv,\\
&\int_{\mathbb  R^d}\int_{\Omega (R)^c} |\nabla V| ^{2s} \, |v u |^2\,dxdv \leq R^{2s- \frac 23}\, \int_{\mathbb  R^d}\int_{\Omega (R)^c} |\nabla V| ^{\frac23} \, |v u |^2\,dxdv \,.
\end{align*}
Similarly for  $ || \,  |\nabla V| ^{1-s-\rho_0} \, \nabla_v u \, ||^2$, we have, for $1 > s +\rho_0 > \frac 23$
\begin{align*}
&\int_{\mathbb  R^d}\int_{\Omega (R)} |\nabla V| ^{2-2s -2 \rho_0} \, |\nabla_v u |^2\,dxdv \leq R^{2-2 s -2\rho_0}\, \int_{\mathbb  R^d}\int_{\Omega (R)} \, |\nabla_v u |^2\,dxdv,\\
&\int_{\mathbb  R^d}\int_{\Omega (R)^c} |\nabla V| ^{2-2s-2\rho_0} \, |\nabla_v u |^2\,dxdv \leq R^{\frac 43 -2s -2 \rho_0} \, \int_{\mathbb  R^d}\int_{\Omega (R)^c} |\nabla V| ^{\frac23} \, |\nabla_v u |^2\,dxdv\,,
\end{align*}

By combining the previous inequalities in \eqref{ing_31}, we obtain for a possibly new larger $C>0$ and a $(\delta,R)$  dependent constant 
\begin{align*}
||u||^2+ || \check Ku ||^2 &\geq \frac 1C ||\, |\nabla V |^\frac 23 \,u \, ||^2\\
& \quad  + \left(\frac 1C - \check C_\delta  (R^{2s-2/3} + R^{\frac 43 -2 s-2 \rho_0}) \right) \,\left( \,  ||\, |\nabla V|^\frac 13 \,\nabla_v u \,||^2+ |\nabla V |^\frac 13 \, v u \,||^2\right)
\\
& \quad  + (\frac 1C - C \delta^2)  || (v_1\partial_{v_2}-v_2\partial_{v_1}) u ||^2\\
&\quad - C_{\delta,R}  (||\nabla_v u||^2+ ||vu||^2 + ||u||^2) \,.
\end{align*}
We can achieve the proof by observing our conditions on $s$, $\rho_0$,  $\gamma_0$ and by choosing fist $\delta$ small enough and then  $R$ large enough. With this choice of $\delta$ and $R$, we obtain the existence of a constant $C>0$ such that 
\begin{equation*}
\begin{array}{l}
||u||^2+ || \check Ku ||^2 \\ \quad   \geq \frac 1C \left( ||\, |\nabla V|^\frac 23 \,u \, ||^2 + \,  || |\nabla V |^\frac 13 \,\nabla_v u \,||^2+ |\nabla V |^\frac 13 \, v u \,||^2
 + || (v_1\partial_{v_2}-v_2\partial_{v_1}) u ||^2 \right)\\ \qquad \qquad  - C  (||\nabla_v u||^2+ ||vu||^2 + ||u||^2) \,.
\end{array}
\end{equation*}
Combining this inequality with the standard inequality
$$
\Re < \check K u, u>\,  \geq  ||\nabla_v u||^2+ ||vu||^2  \,,\, \forall u \in C_0^\infty (\mathbb R^{2d})\,,
$$
we can  achieve the proof of the theorem, keeping in mind that the maximal estimate for $\check K$ is equivalent to  the maximal estimate for $K$.



\end{document}